\newcommand\chrom[2]{\phantom{}^{{#1}}_{#2}}
\newtheorem{theo}{\textbf{Theorem}}[section]
\newtheorem{thm}[theo]{\textbf{Theorem}}
\newtheorem{lemma}[theo]{\textbf{Lemma}}
\newtheorem{lem}[theo]{\textbf{Lemma}}
\newtheorem{proposition}[theo]{\textbf{Proposition}}
\theoremstyle{definition}
\theoremstyle{remark}
\newtheorem{remark}[theo]{\textbf{Remark}}
\newcommand{\ep}{\varepsilon}
\newcommand{\R}{\mathbb R}
\newcommand{\rg}{{\rm rg}\,}
\newcommand\codim{{\rm codim}\,}
\newcommand\vspan{{\rm span}\,}
\newcommand\dd{\mathrm{d}}
\newcommand{\s}{\frac{\sigma^{2}}{2}}%{\frac{\sigma ^2}{2}}
\title{The spatio-temporal dynamics of interacting genetic incompatibilities.\\
 Part {\rmfamily I}: The case of stacked underdominant clines
}
\date{}
\begin{document}
\maketitle

\begin{center}
	{\large\bf Matthieu Alfaro \footnote{Universit\'e de Rouen Normandie, CNRS, Laboratoire de Math\'ematiques Rapha\"el Salem, Saint-Etienne-du-Rouvray, France \& BioSP, INRAE, 84914, Avignon, France.}, Quentin Griette \footnote{Universit\'e de Bordeaux, CNRS, Institut de Math\'ematiques de Bordeaux, UMR 5251, 33400, Talence, France.}, Denis Roze \footnote{CNRS, IRL 3614 \& Sorbonne Universit\'{e}, Station Biologique de Roscoff, 29688 Roscoff, France.} and
Beno\^{i}t Sarels \footnote{Sorbonne Université, CNRS, Université de Paris, Laboratoire Jacques-Louis Lions (LJLL), 75005 Paris, France.}} \\
[2ex]
\end{center}

%\vspace{15pt}

%\tableofcontents

\vspace{5pt}

\begin{abstract}
We explore the interaction between two genetic incompatibilities (underdominant loci in diploid organisms) in a population occupying a one-dimensional space.
We derive a system of partial differential equations describing the dynamics of allele frequencies and linkage disequilibrium between the two loci, and use a quasi-linkage equilibrium approximation in order to reduce the number of variables.
We investigate the solutions of this system and demonstrate the existence of a solution in which the two clines in allele frequency remain stacked together. In the case of asymmetric incompatibilities ({\it i.e.} when one homozygote is favored over the other at each locus), these stacked clines propagate in  the form of a traveling wave. We obtain an approximation for the speed of this wave which, in particular, is  decreased by recombination between the two loci but is always larger than the speed of \lq\lq one cline alone''.

\medskip

\noindent \underline{Keywords:} genetic incompatibilities, underdominance, quasi-linkage equilibrium,  standing wave, traveling wave, perturbation analysis.\\

\noindent \underline{AMS Subject Classifications:} 92D10 (Genetics and epigenetics),  35C07 (Traveling wave solutions), 35B20 (Perturbations in context of PDEs).
\end{abstract}

\section{Introduction}\label{s:intro}

Genetic incompatibilities correspond to deleterious interactions between alleles (at the same locus or at different loci) within the same genome, and are the cause of the reduced fitness of hybrids between species \cite{CoyneOrr}, \cite{Gavrilets}. Such incompatibilities may be revealed by crosses between divergent populations or species, which may be performed experimentally \cite{Fraisse}, but may also occur naturally in hybrid zones resulting from secondary contacts between genetically divergent populations \cite{BartonHewitt1985, BartonHewitt1989}. Indeed, the offspring of such crosses carry a mix of alleles from the two parental populations, which may not function well together. Genetic incompatibilities may also be widespread within the same species, as suggested by recent data from \emph{Drosophila} \cite{CorbettDetig}. 

How several incompatibilities segregating within the same population interfere with each other has important consequences for the evolution of reproductive isolation, and the maintenance of distinct genetic entities after a secondary contact. Barton \& de Cara \cite{BartonDeCara} showed that, in the case of a single population (sympatry), incompatibilities are expected to couple through the buildup of linkage disequilibrium among them, which may eventually lead to strong reproductive isolation between two distinct genetic backgrounds. In spatially structured populations with limited dispersal (parapatry), clines in allele frequencies may form between regions containing different sets of incompatible alleles \cite{Barton79}. In this case, linkage disequilibria generated by dispersal generally tend to pull clines together \cite{Slatkin}, again leading to the {\it coupling} of genetic incompatibilities which then tend to reinforce each other. This process was explored by Barton \cite{Barton83} in the case of a continuous, linear habitat, and when genetic incompatibilities are generated by an arbitrary number of underdominant loci: at each locus, heterozygotes (say $Aa$) have a lower fitness than homozygotes, while the two homozygotes ($AA$ and $aa$) have equal fitness. This form of {\it symmetric} selection (against heterozygotes) can maintain stable clines in allele frequencies \cite{Bazykin}, \cite{Barton79}, separating regions where $AA$ and $aa$ individuals are abundant ($Aa$ hybrids being generated between these regions). When two such clines overlap in space (one separating regions where $AA$ and $aa$ are abundant, and the other separating regions where $BB$ and $bb$ are abundant), they tend to attract each other until they coincide, as illustrated by the numerical simulations in Figure \ref{fig:transitory-regime}, and then become motionless.

In the {\it asymmetric} situation where one allele has a selective advantage over the other (\emph{i.e.}, one homozygous genotype, say $AA$, has a higher fitness than the other, $aa$), the cline will move in the direction of the less fit genotype \cite{Barton79}. The interaction between several such asymmetric incompatibilities raises several questions that remain little explored to date, such as: when clines moving at different speeds come into contact, will they remain {\it stacked} (increasing the degree of reproductive isolation between the two sides of the clines) or not? If they do remain stacked, what will be the speed of the resulting front?  Under which conditions may an asymmetric incompatibility escape from a hybrid zone in which several incompatibilities are segregating?

This article constitutes a first step in the exploration of the spatio-temporal dynamics of interacting asymmetric incompatibilities, focusing on a simple situation involving two coupled underdominant loci (with alleles $A$, $a$ at the first locus, $B$, $b$ at the second) with identical fitness effects. Notice that Barton \cite{Barton83} has considered the situation where heterozygotes present a cost in fitness and where
the fitness of the homozygotes $AB|AB$ and the one of $ab|ab$ are the same, that is the symmetric case. In Section \ref{s:stationary-sol}, we give a mathematical proof of the existence of  such a cline in this symmetric situation, see Proposition \ref{prop:standing}.
We also prove in Proposition \ref{prop:standing-stab} that such a stationary cline is stable, {\it i.e.} that small perturbations of the profile may shift its spatial position but essentially do not alter its shape. In other words, a perturbed stationary cline comes back to a possibly shifted stationary cline. We refer to  \eqref{eq-ep-zero} for the he equation satisfied by such a cline.

When the fitness of the homozygote $AB|AB$ becomes slightly larger than the one of $ab|ab$, it is not {\it a priori} obvious whether the stationary cline stays stationary or begins to move. Here we answer this question by showing that invasion does occur even if the difference in fitness between homozygotes has a lower order of magnitude (measured by $0<\varepsilon\ll  1$)  compared to the fitness cost of heterozygotes. By using a rather involved perturbation analysis, we show in Theorem \ref{thm:existence-eps} that a front traveling at a constant speed $c_\varepsilon>0$ emerges from the stationary cline $u_0$ solving \eqref{eq-ep-zero} when $\varepsilon$ becomes positive. Such a traveling front is a solution to the reaction-diffusion equation involving nonlinear gradient terms  \eqref{eq-u}. We give an explicit approximation of the speed $c_\varepsilon$ which is, from the modelling point of view,  the main contribution of the present work. Among other implications, it reveals not only that recombination between the two loci tends to slow down the propagation of the front but also that the stacked clines always 
travel faster than one cline alone.

\medskip

The organization of the paper is as follows. In Section \ref{s:derivation} we derive the mathematical model, a PDE system involving nonlinear gradient terms. Through a phase plane analysis, we construct stationary solutions in Section \ref{s:stationary-sol}. Then, in Section \ref{s:front}, we construct traveling fronts thanks to a perturbation argument. In Section \ref{s:speed}, a trick enables to derive an explicit approximation of the speed $c_\ep$ which sheds light on the original model. We conclude and present some perspectives in Section \ref{s:conclusion}.

\section{Derivation of the mathematical model}\label{s:derivation}

\subsection{Biological assumptions}

The population is homogeneous in a one-dimensional space, along which density is supposed uniform and large.  We focus on the variations of the genetic composition of the population.
We consider that the fitness of a (diploid)  individual is affected by two loci: a first locus with two alleles $A$ and $a$ and a second locus with two alleles $B$ and $b$. We assume that heterozygotes have the lowest fitness (underdominance), the fitness of the different genotypes at each locus being given  in Table \ref{Table1}.

\begin{table}[H]
	\centering
\begin{tabular}{|c|c|}
	\hline
	genotype & fitness \\
	\hline \hline
	$AA$	& 	$1+2s_A$ 	\\
	$Aa$	&	$1+s_A-S_A$	\\
	$aa$	&	$1$	 	\\
	\hline
\end{tabular}
\hfil
\begin{tabular}{|c|c|}
	\hline
	genotype & fitness \\
	\hline \hline
	$BB$	& 	$1+2s_B$ 	\\
	$Bb$	&	$1+s_B-S_B$	\\
	$bb$	&	$1$	 	\\
	\hline
\end{tabular}
	\caption{ Fitness of the different genotypes at each locus.}\label{Table1}
\end{table}
\noindent Here the constants $s_A,s_B, S_A, S_B$ satisfy $0\leq s_A<S_A$, $0\leq s_B<S_B$. We then assume multiplicative effects among loci, so that the fitnesses {$\mathcal W$} of two-locus genotypes are given in Table \ref{Table2}.

\begin{table}[H]
\centering
	\begin{tabular}{|c||c|c|c|}
	\hline
	 & $AA$ & $Aa$ & $aa$ \\
	\hline \hline
	$BB$ & $ (1+2s_B)(1+2s_A)$ & $ (1+2s_B)(1+s_A-S_A)$ & $ 1+2s_B$ \\
	\hline
	$Bb$ & $ (1+s_B-S_B)(1+2s_A) $ & $(1+s_B-S_B)(1+s_A-S_A) $ & $ 1+s_B-S_B$ \\
	\hline
	$bb$ & $ 1+2s_A $ & $ 1+s_A-S_A$ & $1$ \\
	\hline
	\end{tabular}
	\caption{Fitness of diploid individuals.}\label{Table2}
\end{table}

\noindent Note that, because we will derive expressions to the first order in $s_A$, $s_B$, $S_A$ and $S_B$, assuming additive effects among loci would lead to the same results. 

We assume that recombination occurs with probability $r$, so that $AB|ab$ individuals may produce $Ab$ and $aB$ gametes.  Throughout the paper, the population occupying the left-hand side of the linear habitat will consist mostly of $AB|AB$ individuals, while the right-hand side will be mostly composed of $ab|ab$ individuals.
In the symmetric situation ($s_A=s_B=0$), if the clines of $A$ and $B$ are shifted in space, linkage disequilibrium will develop between the two loci and will pull both fronts together until they are stacked \cite{Slatkin}, \cite{Barton83}, as illustrated in Figure \ref{fig:transitory-regime}. This paper is concerned with the established regime where the fronts are stacked (this situation may also result from a secondary contact between two divergent populations, as considered by \cite{Barton83}). Note that in the general case ($s_A$, $s_B\neq 0$), shifted clines may not necessarily become stacked; however, we postpone the analysis of the conditions for stacking  to future works.

The PDEs describing the dynamics of two underdominant loci in a 1-dimensional continuous habitat can be obtained by combining the works  \cite{Barton79} and \cite{Barton83}. For the self-containedness of the present work, we present here a derivation of these equations, obtained by approximating a discrete-time model by a continuous-time model. In Section \ref{derivation} we present the genetic model that drives the genetic dynamics. In Section \ref{ss:spatial-model} we introduce the spatial structure and the corresponding equations. Finally in Section \ref{ss:conc-goals}, we precise our assumptions on the parameters and their respective magnitudes, as well as our precise objectives.

\subsection{Recursions on gamete frequencies in a discrete in time setting}
\label{derivation}

We start by considering a single population of diploid, hermaphroditic individuals with nonoverlapping generations. 
At the end of a generation (at time $t$), individuals release gametes and immediately die. The next generation, at time $t+1$, is formed by the random fusion of gametes. Under these hypotheses, it is sufficient to follow the frequencies of gametes produced at each generation, which completely determine the next generation of individuals (by the law of large numbers).
Denote by $y_{\chrom AB}$, $y_{\chrom Ab}$, $y_{\chrom aB}$, $y_{\chrom ab}$ the frequencies of the different types of gametes at generation $t$.  The fusion at random of these four combinations  gives birth to sixteen types of individuals (\lq\lq ordered'' in the sense that $z_{i|j}\neq z_{j|i}$ for $i\neq j$)
$$
z_{i|j}, \quad i, j\in\{\chrom AB, \chrom Ab, \chrom aB, \chrom ab\},
$$
with proportions $p_{i|j}$. Notice that, for $i\neq j$, the fusion can be male-female or female-male so that we  have $p_{i|j}=2\times \frac 12 y_iy_j$, thus
$$
p_{i|j}=y_{i}y_{j}.
$$

Each one of these individuals then produces gametes according to its fitness, providing the generation of gametes $y'_{\chrom AB}$, $y'_{\chrom Ab}$, $y'_{\chrom aB}$, $y'_{\chrom ab} $ at time $t+1$. Here we assume that there is a probability of recombination $0\leq r\leq \frac 12$ between the two loci. For each of the sixteen diploid genotypes, the process is as one of the three following examples:

$\bullet$ the individuals $z_{\chrom AB|\chrom AB}$, whose proportion is $y_{\chrom AB}^2$, release gametes $\chrom AB$ in proportion $1$.

$\bullet$ the individuals $z_{\chrom AB|\chrom Ab}$, whose proportion is $y_{\chrom AB}y_{\chrom Ab}$, release gametes $\chrom AB$ in proportion $\frac 12$ and gametes $\chrom Ab$ in proportion $\frac 12$.

$\bullet$ the individuals $z_{\chrom AB|\chrom ab}$, whose proportion is $y_{\chrom AB}y_{\chrom ab}$, release gametes $\chrom AB$ and $\chrom ab$ both in proportion $\frac{1-r}{2}$ (no recombination), and gametes $\chrom Ab$ and $\chrom aB$ both in proportion $\frac{r}{2}$ (recombination).

All these processes are weighted by the fitness of each type of individual, as in the above table. After a tedious but straightforward analysis, one obtains:
\begin{align*}
	\notag y'_{\chrom AB} =\frac{1}{\overline{\mathcal W}}&\Big[(1+2s_A)(1+2s_B)y_{\chrom AB}^2 + (1+2s_A)(1+s_B-S_B)y_{\chrom AB} y_{\chrom Ab} + (1+s_A-S_A)(1+2s_B) y_{\chrom AB}y_{\chrom aB} \\
	 &\quad + (1-r) (1+s_A-S_A)(1+s_B-S_B)y_{\chrom AB}y_{\chrom ab}+r(1+s_A-S_A)(1+s_B-S_B)y_{\chrom aB}y_{\chrom Ab}\Big]\\
	\notag y'_{\chrom Ab} =\frac{1}{\overline{\mathcal W}}&\Big[(1+2s_A) y_{\chrom Ab}^2 + (1+2s_A)(1+s_B-S_B) y_{\chrom Ab}y_{\chrom AB} + (1+s_A-S_A) y_{\chrom Ab}y_{\chrom ab} \\
	&\quad + (1-r)(1+s_A-S_A)(1+s_B-S_B)y_{\chrom Ab}y_{\chrom aB} + r(1+s_A-S_A)(1+s_B-S_B) y_{\chrom AB} y_{\chrom ab} \Big]\\
	\notag y'_{\chrom aB} =\frac{1}{\overline{\mathcal W}}&\Big[ (1+2s_B)y_{\chrom aB}^2 + (1+s_B-S_B)y_{\chrom aB}y_{\chrom ab} + (1+s_A-S_A)(1+2s_B)y_{\chrom aB}y_{\chrom AB} \\
	&\quad (1-r)(1+s_A-S_A)(1+s_B-S_B)y_{\chrom aB} y_{\chrom Ab} + r(1+s_A-S_A)(1+s_B-S_B)y_{\chrom AB}y_{\chrom ab} \Big]   \\
	\notag y'_{\chrom ab} =\frac{1}{\overline{\mathcal W}} &\Big[ y_{\chrom ab}^2 + (1+s_A-S_A)y_{\chrom ab}y_{\chrom Ab} + (1+s_B-S_B)y_{\chrom ab}y_{\chrom aB} \\
	&\quad + (1-r)(1+s_A-S_A)(1+s_B-S_B)y_{\chrom ab} y_{\chrom AB} + r(1+s_A-S_A)(1+s_B-S_B)y_{\chrom aB}y_{\chrom Ab}\Big],
	\end{align*}
where $\overline{\mathcal W}$ is the average fitness:
\begin{eqnarray*}
\overline{\mathcal W}&=&\sum_{i, j\in\{\chrom AB,\chrom Ab, \chrom aB, \chrom aB\}} z_{i|j}\mathcal W_{i|j}\\
&=& (1+2s_A)(1+2s_B)y_{\chrom AB}^2
+(1+2s_A)y_{\chrom Ab}^2
+(1+2s_B)y_{\chrom aB}^2
+y_{\chrom a b}^2 \\
&+& 2(1+2s_A)(1+s_B-S_B)y_{\chrom AB}y_{\chrom Ab}
+2(1+s_A-S_A)(1+2s_B)y_{\chrom AB}y_{\chrom aB} \\
&+&2(1+s_A-S_A)(1+s_B-S_B)y_{\chrom AB}y_{\chrom ab}
+2(1+s_A-S_A)(1+s_B-S_B)y_{\chrom Ab}y_{\chrom aB} \\
&+&2(1+s_A-S_A)y_{\chrom Ab}y_{\chrom ab}
+2(1+s_B-S_B)y_{\chrom aB}y_{\chrom ab}.
\end{eqnarray*}
Notice that adding the four above equations, one can check $
y'_{\chrom AB}+y'_{\chrom Ab} + y'_{\chrom aB} + y'_{\chrom ab} =\frac{\overline {\mathcal W}}{\overline {\mathcal W}}=1$.

\medskip

For ease of notation, we now let
\begin{equation}
\label{notations}
u:=y_{\chrom AB},\quad  v:=y_{\chrom Ab}, \quad w:=y_{\chrom aB}, \quad z:=y_{\chrom ab},
\end{equation}
so that
\begin{equation}
\label{somme-un}
u+v+w+z=1.
\end{equation}
As in \cite{Barton83}, we shall rather work on the three components system satisfied by
\begin{equation}
\label{inconnues}
p:=u+v, \quad q:=u+w, \quad  D:=uz-vw,
\end{equation}
where

$\bullet$  $p$ measures the frequency of allele $A$, 

$\bullet$ $q$ measures the frequency of allele $B$,

$\bullet$ $D$ stands for the linkage disequilibrium, measuring the association between alleles $A$ and $B$ within gametes (notice than, equivalently, $D=u-pq$).

Notice that 
\begin{equation}
\label{inverse}
u=pq+D,\quad v=p\left(1-q\right)-D,\quad w=\left(1-p\right)q-D, \quad z=(1-p)(1-q)+D.
\end{equation}

Next, we assume that $s_A$, $s_B$, $S_A$, $S_B$, $r$ are small and of the same order of magnitude, that is
\begin{equation}
\label{small-des}
s_A\leftarrow s_A\alpha,\quad s_B\leftarrow s_B \alpha,\quad S_A\leftarrow S_A\alpha,\quad S_B\leftarrow S_B\alpha,\quad r\leftarrow r\alpha, 
\end{equation}
for $0<\alpha\ll 1$. Taking into account \eqref{notations}, \eqref{somme-un},  \eqref{inconnues}, \eqref{inverse}, \eqref{small-des}, one can perform  straightforward (but tedious)  computations and obtain to the first order in $\alpha$: 
\begin{equation}\label{syst-pqD-discret}
    \begin{cases}
    p' &= p+\alpha\Big[\left(S_A(2p-1)+s_A\right)p(1-p)+\left(S_B(2q-1)+s_B\right)D\Big] \vspace{7pt}\\
   q' &= q+\alpha\Big[\left(S_B(2q-1)+s_B\right)q(1-q)+\left(S_A(2p-1)+s_A\right)D\Big]  \vspace{7pt}\\
   D' &=D-\alpha\Big[r+(2p-1)\left(S_A(2p-1)+s_A\right)+(2q-1)\left(S_B(2q-1)+s_B\right)\Big]D.
    \end{cases}
\end{equation}

\subsection{Inserting a spatial structure and switching to continuous time}
\label{ss:spatial-model}

Finally we consider the associated problem with a spatial structure $x\in \R$ (corresponding to the position of individuals along space) and continuous time $t\geq 0$. More precisely, we assume that gametes migrate according to a dispersal kernel centered on 0 and with variance $\sigma^{2}$. In the diffusion limit, and from \eqref{syst-pqD-discret}, the equations for the frequencies $p=p(t,x)$, $q=q(t,x)$ are
\begin{equation*}
    \begin{cases}
    p_t &= \s p_{xx} +\left(S_A(2p-1)+s_A\right)p(1-p)+\left(S_B(2q-1)+s_B\right)D \vspace{7pt}\\
   q_t &= \s q_{xx}+\left(S_B(2q-1)+s_B\right)q(1-q)+\left(S_A(2p-1)+s_A\right)D,
    \end{cases}
\end{equation*}
where $\sigma >0$. Notice that, since we assumed that the density of individuals is uniform and large, no advection term appears in the above system.  As for the equation for the disequilibrium  $D=uz-vw$, we  have additional gradient terms (e.g. \cite{BartonGale}, \cite{Barton83}) since
\begin{eqnarray*}
D_t &=&\left(\s u_{xx}+\cdots\right)z+u\left(\s z_{xx}+\cdots\right)-\left(\s v_{xx}+\cdots\right)w-v\left(\s w_{xx}+\cdots\right)\\
&=& \s \left(D_{xx}+2(-u_xz_x+v_xw_x)\right)+\cdots\\
&=&\s \left(D_{xx}+2(p_xq_x\right)+\cdots
\end{eqnarray*}
where we have used the identity
$$
p_xq_x=(u+v)_x(u+w)_x=u_x(u_x+v_x+w_x)+v_xw_x=-u_xz_x+v_xw_x.
$$
Hence, from \eqref{syst-pqD-discret}, the equation for $D=D(t,x)$ is
$$
D_t =\s D_{xx}+\sigma ^2 p_xq_x-\left[r+(2p-1)\left(S_A(2p-1)+s_A\right)+(2q-1)\left(S_B(2q-1)+s_B\right)\right]D.
$$

\subsection{Conclusion and goals}
\label{ss:conc-goals}

Hence the system for the allele frequencies $p=p(t,x)$, $q=q(t,x)$ and the linkage disequilibrium $D=D(t,x)$ is written
\begin{equation}\label{syst-p-q-D}
    \begin{cases}
    p_t &= \s p_{xx} +\left(S_A(2p-1)+s_A\right)p(1-p)+ (S_B(2q-1)+s_B) D \vspace{7pt}\\
   q_t &= \s q_{xx}+\left(S_B(2q-1)+s_B\right)q(1-q)+(S_A(2p-1)+s_A) D \vspace{7pt}\\
   D_t &=\s D_{xx}+\sigma ^2 p_xq_x-\left[r+(2p-1)\left(S_A(2p-1)+s_A\right)+(2q-1)\left(S_B(2q-1)+s_B\right)\right]D,
    \end{cases}
\end{equation}
where $\sigma >0$, $r>0$, $s_A>0$, $s_B>0$, $s_A>0$ and $S_B>0$ are given parameters. Observe that, starting from $D\equiv 0$ (no disequilibrium) the dynamics of $p$ and $q$ are decoupled but the gradient terms $p_x$ and $q_x$ in the $D$-equation cause disequilibrium and thus interaction \cite{Barton83}, see Figure \ref{fig:transitory-regime}.

\begin{figure}[H]
\begin{center}
\includegraphics[width=7cm]{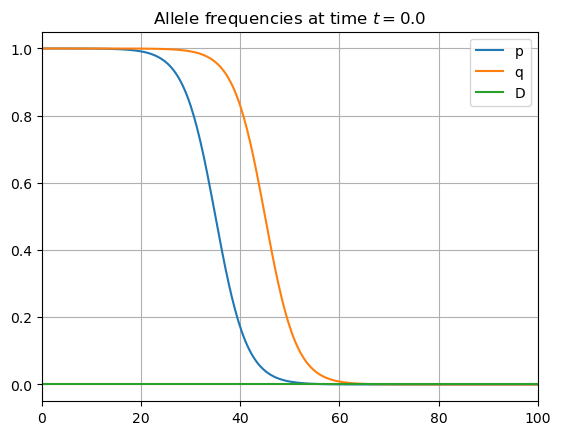}
\includegraphics[width=7cm]{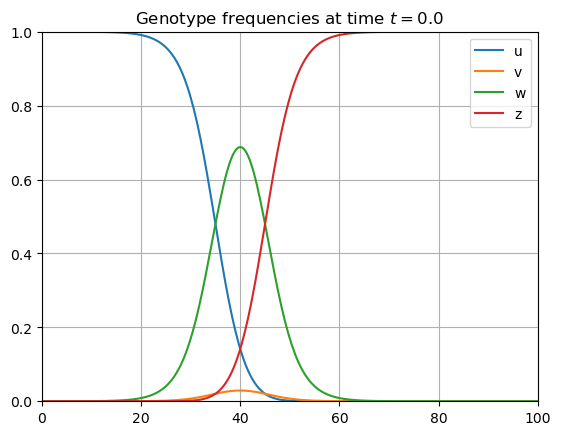}
\\
\includegraphics[width=7cm]{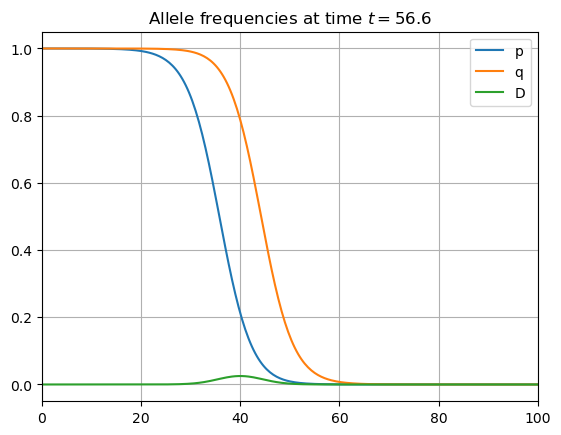}
\includegraphics[width=7cm]{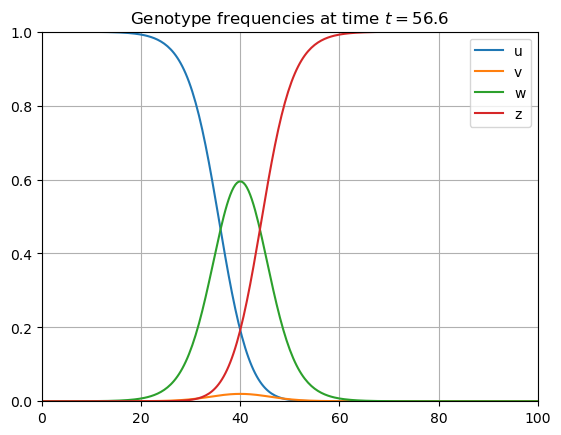}
\\
\includegraphics[width=7cm]{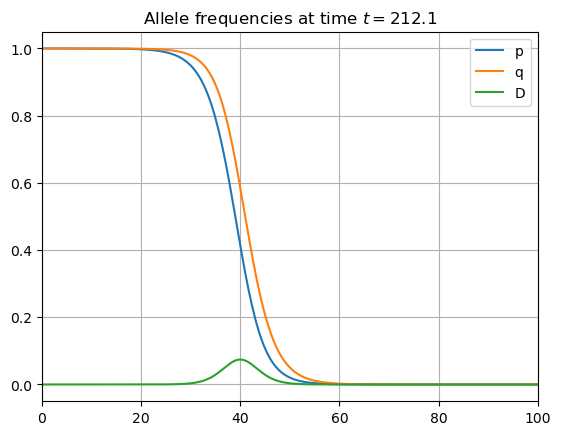}
\includegraphics[width=7cm]{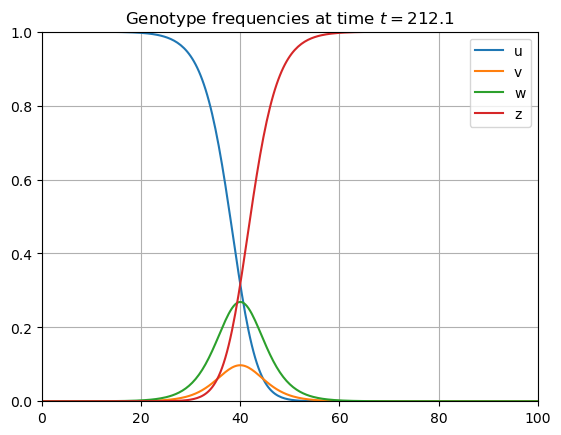}
\caption{ Numerical solutions with parameters $s_A=s_B=0$ (symmetric case), $S_A=S_B=r=0.1$ and $\s=1$. Left column:  $p$, $q$ and $D$; the clines are initially uncoupled; next, in a transitory regime, they are driven closer to each other, and eventually become stacked. Right column: the original unknowns, that is the frequencies of gametes $u$, $v$, $w$, $z$. Remark: the partial differential system in $(u, v, w, z)$ is a reaction-diffusion system for which a standard Strang splitting
method was used; numerical simulations were done in python 3.9.2 with the
NumPy package version 1.20.1.}
\label{fig:transitory-regime}
\end{center}
\end{figure}

Assuming that recombination $r$ is sufficiently large relative to the strength of selection against heterozygotes ($S_A$, $S_B$, determining the gradients in allele frequencies, e.g. \cite{Barton79}), one expects  that $D$ approximately follows
\begin{equation*}
	D_t \approx \s D_{xx}+ \sigma ^2 p_xq_x-rD.
\end{equation*}
In the sequel, we use a \emph{quasi-linkage equilibrium} approximation \cite{Barton83}, meaning that  the dynamics on $D$ is much faster than the one of $p$ and $q$. As a result,
$$
\s D_{xx}+ \sigma ^2 p_xq_x-rD\approx 0,
$$
whose solution is given by (one may use the Fourier transform to see it)
$$
D(t,x)\approx \frac{\sigma ^2}{r} \rho_\sigma * (p_xq_x(t,\cdot))(x), \quad \rho_\alpha(x):=\frac  12 \sqrt{\frac{2r}{\sigma ^2}}e^{-\sqrt{\frac{2r}{\sigma ^2}}\vert x\vert}.
$$
For $\sigma$ sufficiently small, the kernel $\rho _\alpha$ \lq\lq approaches'' the Dirac delta function, and thus
\begin{equation}
\label{quasi-ed}
D\approx \frac{\sigma ^2}{r}p_xq_x.
\end{equation}

As a result, using \eqref{quasi-ed} and writing $(p,q)(t,x)=(\tilde p,\tilde q)\left(t,\frac{\sqrt 2}{\sigma}x\right)$, we reach a simplified version of system \eqref{syst-p-q-D}, namely 
\begin{equation*}
    \begin{cases}
 \tilde   p_t &=  \tilde p_{xx} +S_A f( \tilde p)+ s_A g( \tilde p)+\frac 2 r (S_B(2 \tilde q-1)+s_B)  \tilde p_x  \tilde q_x,\vspace{7pt}\\
     \tilde q_t &=  \tilde q_{xx}+S_B f( \tilde q)+s_B g( \tilde q)+\frac 2 r(S_A(2 \tilde p-1)+s_A)  \tilde p_x  \tilde q_x, 
    \end{cases}
\end{equation*}
where
$$
f(u):=u(2u-1)(1-u), \quad g(u):=u(1-u).
$$
For ease of notation in the mathematical analysis, we now drop the tildes but keep in mind that, when returning to the original model, the traveling waves speeds we will find have to be multiplied by the factor $\frac{\sigma}{\sqrt 2}$. Last, we assume that
\begin{equation}\label{sAsBep}
S_A=S_B=S,\quad s_A=s_B=s=:\ep,
\end{equation}
and thus focus on the system
\begin{equation}\label{syst-p-q}
    \begin{cases}
    p_t &= p_{xx} +Sf(p)+ \ep g(p)+\frac 2 r (S(2q-1)+\ep) p_x q_x,\vspace{7pt}\\
   q_t &= q_{xx}+Sf(q)+\ep g(q)+\frac 2 r(S(2p-1)+\ep) p_x q_x.
    \end{cases}
\end{equation}
Notice that $f$ is a balanced bistable nonlinearity, which is slightly unbalanced by the term $\ep g$. 

In the sequel, our goal is to inquire on the situation where the $A$ cline, measured by $p$, and the $B$ cline, measured by $q$, remain stacked together.
To do so we look after $u=p=q$ solving the nonlinear equation
\begin{equation}
\label{eq-u}
u_t=u_{xx}+Sf(u)+\ep g(u)+\frac 2 r (S(2u-1)+\ep) u_x^2.
\end{equation}
We suspect the existence of a stationary solution connecting 1 to 0 for $\ep=0$ and that of a front connecting 1 to 0 and traveling at a speed $c_\ep\sim c_1 \ep$ for  some $c_1>0$ and $0<\ep \ll 1$. These facts are proved in Section \ref{s:stationary-sol} and \ref{s:front}, while $c_1$ is explicitly identified in Section \ref{s:speed}.

\section{Standing together ($\ep=0$)}\label{s:stationary-sol}

In this section, we construct a stationary solution connecting 1 to 0 in \eqref{eq-u} when $\ep =0$, and then prove its stability.

\subsection{Construction of the standing wave}

We are here looking after a $u_0:\R \to \R$ solving 
\begin{equation}
\label{eq-ep-zero}
\begin{cases}
u_0 ''+Sf(u_0)+\displaystyle \frac 2 r S(2u_0-1)(u_0')^2=0\quad  \text{ on } \R,\vspace{5pt}\\
u_0(-\infty)=1, \quad u_0(+\infty)=0.
\end{cases}
\end{equation}

\begin{lemma}[A priori estimates]\label{lem:a-priori} Any standing wave solution of \eqref{eq-ep-zero} has to satisfy $0<u_0<1$ and $u_0'(\pm \infty)=0$.
\end{lemma}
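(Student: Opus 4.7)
The plan is to treat the two claims separately: first confine $u_0$ to the open interval $(0,1)$ by a maximum-principle argument at interior extrema combined with Cauchy--Lipschitz uniqueness, and then produce a first integral of \eqref{eq-ep-zero} to force $u_0'(\pm\infty)=0$.

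For the bounds, I would suppose $\sup_\R u_0>1$. Since $u_0(\pm\infty)\in\{0,1\}$, this supremum must be attained at some finite $x_0$, where $u_0'(x_0)=0$ and $u_0''(x_0)\leq 0$. Plugging into \eqref{eq-ep-zero} gives $u_0''(x_0)=-Sf(u_0(x_0))$; but the factorization $f(u)=u(2u-1)(1-u)$ shows $f(u_0(x_0))<0$ when $u_0(x_0)>1$, so $u_0''(x_0)>0$, a contradiction. The symmetric argument rules out $\inf u_0<0$, hence $0\leq u_0\leq 1$. To strengthen this to $0<u_0<1$, observe that $u\equiv 0$ and $u\equiv 1$ are solutions of the autonomous, smooth ODE \eqref{eq-ep-zero}, so if $u_0$ touched either of these constant solutions at a finite point $x_0$ it would have vanishing derivative there (as a global extremum), and Cauchy--Lipschitz uniqueness at the second-order level would force $u_0\equiv 0$ or $u_0\equiv 1$, contradicting the boundary conditions.

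For the asymptotic behavior of $u_0'$, the heart of the argument is to integrate the ODE once. The right integrating factor is $\mu(u):=e^{\frac{4S}{r}(u^2-u)}$, because its derivative $\mu'(u)=\frac{4S}{r}(2u-1)\mu(u)$ exactly matches the nonlinear gradient coefficient in \eqref{eq-ep-zero}. Multiplying the equation by $\mu(u_0)u_0'$ and grouping the resulting terms as a perfect derivative yields the conservation law
\begin{equation*}
E(x):=\frac{1}{2}\mu(u_0(x))(u_0'(x))^2+\int_0^{u_0(x)} S\mu(v)f(v)\,\dd v\equiv C.
\end{equation*}
Boundedness of $u_0$ then gives boundedness of $u_0'$. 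Sending $x\to+\infty$, $u_0\to 0$ forces $\mu(u_0)\to 1$ and the integral to $0$, hence $(u_0'(x))^2\to 2C$. If $2C>0$, then $|u_0'|$ is eventually bounded below by a positive constant, which forces $u_0$ to be monotone with unbounded variation near $+\infty$, contradicting $u_0(+\infty)=0$. Hence $C=0$ and $u_0'(+\infty)=0$. The same reasoning at $-\infty$ gives $(u_0'(x))^2\to-2\int_0^1 S\mu(v)f(v)\,\dd v$, and the monotonicity obstruction again forces this limit to be zero, so $u_0'(-\infty)=0$. (The required compatibility $\int_0^1 S\mu(v)f(v)\,\dd v=0$ is consistent: after the substitution $v=1/2+w$ the integrand becomes an odd function of $w$ on $[-1/2,1/2]$.)

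The main obstacle is spotting the integrating factor $\mu$. Because of the quadratic gradient term $\frac{2}{r}S(2u_0-1)(u_0')^2$ the ODE is not in variational form, so multiplying by $u_0'$ alone does not produce a conservation law; $\mu$ is designed precisely to absorb this nonconservative contribution. Once $E$ is identified, all remaining steps reduce to straightforward limit analysis.
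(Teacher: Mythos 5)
Your proof is correct, but it takes a genuinely different route from the paper on the second claim. For the bounds, both arguments start the same way (testing the equation at a putative extremum outside $[0,1]$, where the gradient term vanishes and the sign of $f$ gives a contradiction); you then obtain strictness via Cauchy--Lipschitz uniqueness against the constant solutions $0$ and $1$, where the paper simply invokes the strong maximum principle --- these are interchangeable here. For $u_0'(\pm\infty)=0$, the paper argues locally: near $+\infty$ the equation forces $u_0''>0$, so $u_0'$ is monotone, hence has a limit, which must vanish because $u_0$ is bounded. You instead build the first integral
$E=\tfrac12 e^{\frac{4S}{r}(u_0^2-u_0)}(u_0')^2+\int_0^{u_0}S e^{\frac{4S}{r}(v^2-v)}f(v)\,\dd v$,
which is exactly the identity the paper only derives later, in Section \ref{s:speed} (formula \eqref{u-prime-u}, with the constant fixed by the behaviour at infinity), and you use it to show $(u_0')^2$ has a limit that must be zero. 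Your computation is sound (the integrating factor matches the gradient coefficient, and the vanishing of $\int_0^1 S e^{\frac{4S}{r}(v^2-v)}f(v)\,\dd v$ follows from the odd symmetry about $v=\tfrac12$, which is also what makes the two requirements at $\pm\infty$ compatible). The paper's convexity argument is more elementary and purely local; your conservation-law argument is heavier to set up but yields more --- it gives the explicit relation between $u_0'$ and $u_0$ that the paper exploits anyway for the speed formula, and it would also streamline the asymptotics \eqref{asymptotics} and the phase-plane construction.
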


\begin{proof}  If $u_0 \leq 1$ is not true then, from the boundary conditions, $u_0$ has to reach a maximum value strictly larger than 1 at some point but, testing the equation at this point, this cannot hold.  Hence $u_0\leq 1$ and, from the strong maximum principle, $u_0<1$. Similarly $u_0>0$.

From the equation and the boundary condition, $u_0''>0$ in some $(A,+\infty)$, so that $u_0'$ is increasing on $(A,+\infty)$. As a result $u'_0$ has a limit in $+\infty$, which has to be zero since $u_0$ is bounded. Similarly $u_0'(-\infty)=0$.
\end{proof}

Using a phase plane analysis $(x,y)=(u_0,u_0')$, the equation in \eqref{eq-ep-zero} is recast
\begin{equation}
\label{phase-plane}
\begin{cases}
x'=y\vspace{5pt}\\
y'=-Sf(x)- \frac 2 rS(2x-1) y^{2}.
\end{cases}
\end{equation}
\begin{figure}
\begin{center}
  \includegraphics[width=7cm]{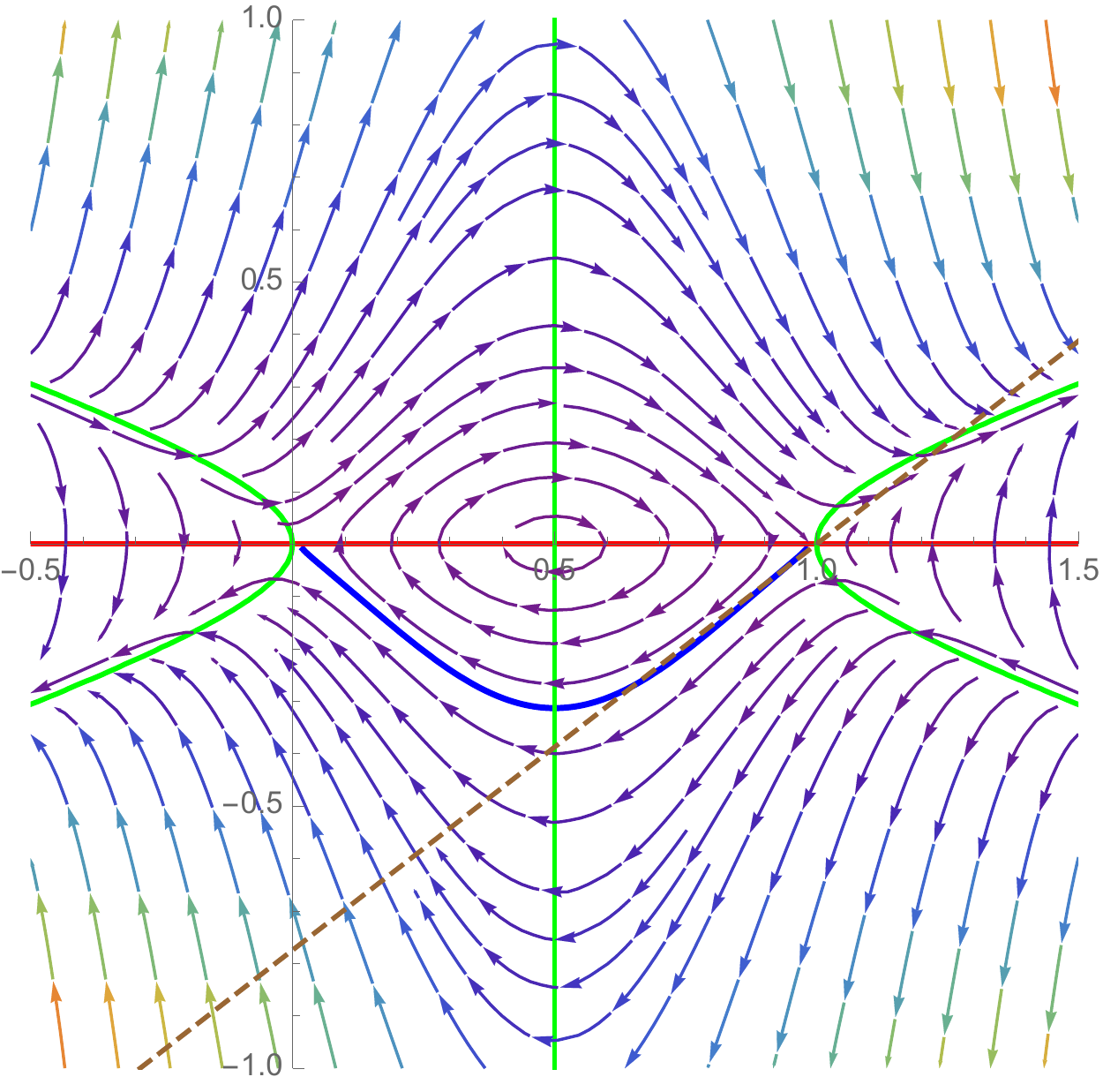}\quad \includegraphics[width=7cm]{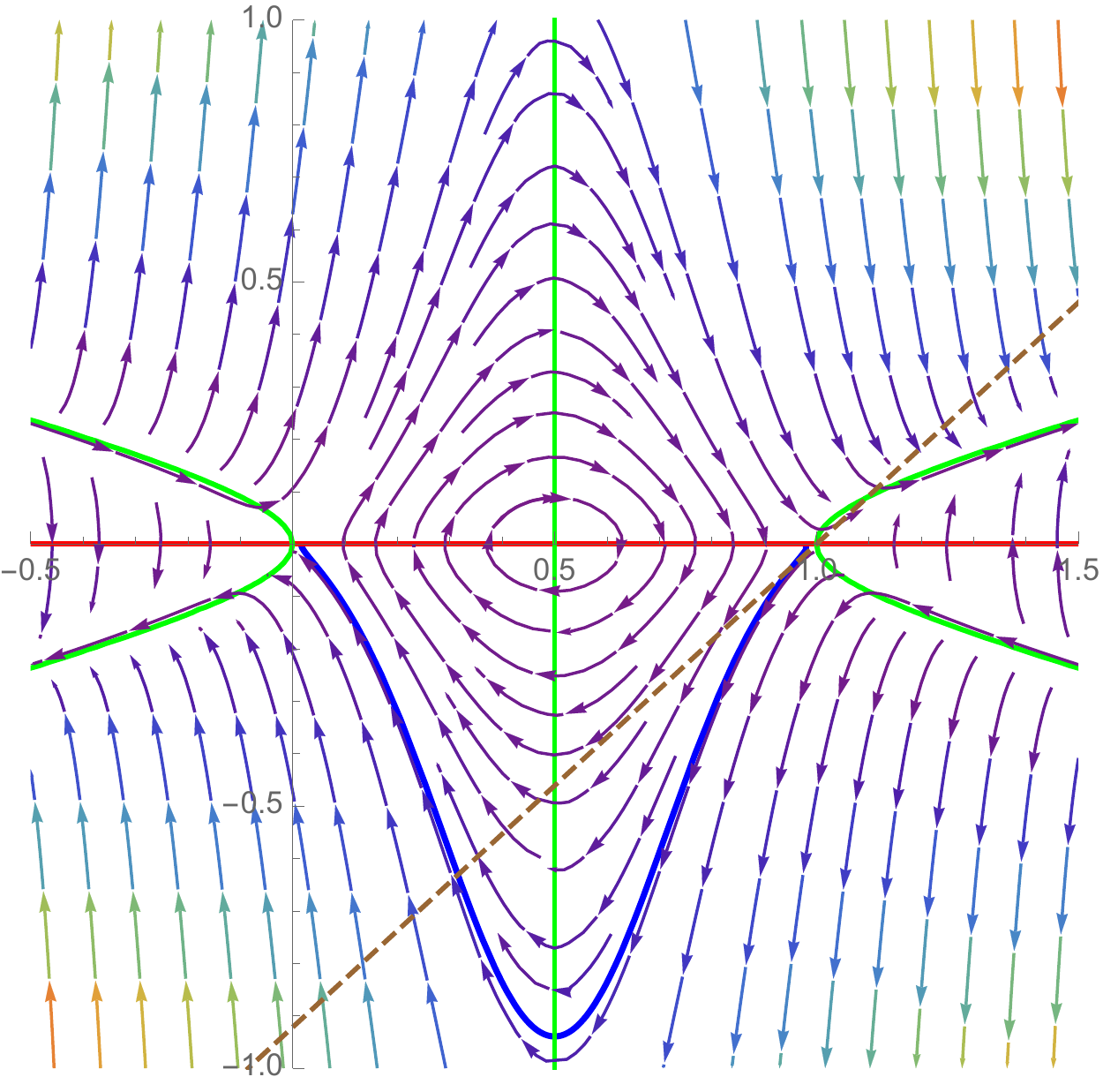}
    \caption{Phase plane analysis for \eqref{phase-plane}. In red, the nullcline $x'=0$, in green the nullcline $y'=0$, in brown dashed the linear unstable manifold at $(1,0)$, in blue (an approximation of) the heteroclinic orbit from $(1,0)$ to $(0,0)$. Left: the parameters are $S=0.6$, $r=0.25$ so that \eqref{condition} holds. Right: the parameters are $S=0.85$, $r=0.15$ so that \eqref{condition} does not hold.}
   \label{fig:phase-plane}
   \end{center}
\end{figure}
The phase plane analysis is depicted in  Figure \ref{fig:phase-plane}. The equilibria $(0,0)$ and $(1,0)$ are saddle points, the eigenvalues of the Jacobian matrix at these points being $\pm \sqrt S$, whereas the equilibrium $(\frac 1 2,0)$  is a center, the eigenvalues of the Jacobian matrix at this point being $\pm i\sqrt{\frac S2}$. At equilibrium $(1,0)$ the linear unstable manifold is the line $y=\sqrt{S}(x-1)$. To prove the existence of a heteroclinic orbit from $(1,0)$ to $(0,0)$, we consider the orbit leaving $(1,0)$ along the unstable manifold. As long as it has not reached $x=\frac 12$ this trajectory satisfies $x'<0$ and $y'<0$ (south west trajectory). In order to prove that the trajectory does cross the vertical line $x=\frac 12$, we need to construct a barrier, from below, preventing the situation $x\to l\geq \frac 12$, $y\to -\infty$. We choose the line $y=\alpha(x-1)$ with $\alpha>0$ to be selected large enough. Choosing $\alpha>\sqrt{S}$ insures that the trajectory is above the barrier in a neighborhood of $(1,0)$. We thus need to show that 
$$
\frac{\vert y'\vert}{\vert x'\vert}<\alpha \quad \text{ on the points $(x,y)$ such that  $y=\alpha(x-1)$, $\frac 12 \leq x <1$.}
$$
After some straightforward computations, this is recast
$$
\varphi(x):=
(2x-1)\left\vert\left(1-\frac{2\alpha ^2}{r}\right)x+\frac{2\alpha^2}{r}\right\vert<\frac{\alpha ^2}{S},\quad \text{ for all }  \frac 12 \leq x<1.
$$
Assuming $1-\frac{2\alpha ^2}{r}<0$, and evaluating the maximum of $\varphi$ on $[\frac 1 2,1]$, we reach
$$
\frac{\left(\frac{2\alpha ^2 }{r}+1\right)^2}{8\left(\frac{2\alpha ^2}{r}-1\right)}<\frac{\alpha ^2}{S},
$$
which can be obtained with $\alpha$ sufficiently large provided
\begin{equation}
\label{condition}
S<4r.
\end{equation}
Notice that, from the modelling point of view, assumption \eqref{condition} is consistent with the asymptotics \lq\lq $S$ small'' performed in Section \ref{s:derivation} (quasi-linkage equilibrium approximation). On the other hand, even if \eqref{condition} does not hold, the  (right) phase plane analysis of Figure \ref{fig:phase-plane} suggests that the heteroclinic orbit joining $(1,0)$ to $(0,0)$ still exists, but the above argument does not apply.

As a result, under assumption \eqref{condition}, the orbit touches the line $x=\frac 12$ at some point $(\frac 12,-\beta)$ for some $\beta>0$. Since the problem is symmetric with respect to $x=\frac 12$, we conclude that the orbit then converges to the equilibrium $(0,0)$ along the stable manifold, the linear stable manifold being given by $y=-\sqrt S x$. This trajectory provides a positive and decreasing solution $u_0$ to \eqref{eq-ep-zero}.

In other words, we have (nearly) proved the following.

\begin{proposition}[Stationary solution for $\ep=0$]\label{prop:standing}
Let us assume \eqref{condition}. Then there is  a unique $u_0:\R\to \R$ solving \eqref{eq-ep-zero} and satisfying the normalization condition $u_0(0)=\frac 12$. 

Moreover, $u_0$ is positive, decreasing, symmetric in the sense that
$$
u_0(-x)=1-u_0(x)\quad \text{ for all } x\in \R,
$$
 and has the asymptotics
\begin{equation}
\label{asymptotics}
1-u_0(x)\sim Ce^{\sqrt S x}\text{ as } x\to -\infty,\quad  u_0(x)\sim C e^{-\sqrt S x}\text{ as } x\to +\infty,
\end{equation}
for some $C>0$.
\end{proposition}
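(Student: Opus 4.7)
The phase-plane construction in the paragraphs preceding the statement already supplies an orbit of \eqref{phase-plane} that leaves $(1,0)$ along its unstable manifold, crosses $\{x=1/2\}$ at some point $(1/2,-\beta)$ with $\beta>0$, and, by the reflection symmetry $(x,y)\mapsto(1-x,-y)$ of the system (which follows from $f(1-u)=-f(u)$ and $2(1-u)-1=-(2u-1)$), continues to $(0,0)$ along its stable manifold $y=-\sqrt S\,x$. Integrating $x'=y<0$ along this heteroclinic recovers a decreasing profile $u_0:\R\to(0,1)$ solving \eqref{eq-ep-zero}; translating the parametrization in $x$, one imposes $u_0(0)=1/2$, which pins down a single profile. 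For uniqueness, any other solution $\tilde u_0$ with $\tilde u_0(0)=1/2$ produces a heteroclinic trajectory of \eqref{phase-plane} from $(1,0)$ to $(0,0)$; since $(1,0)$ is a hyperbolic saddle with one-dimensional unstable manifold, that trajectory must coincide with the one just constructed up to an $x$-translation, and the normalization kills the remaining freedom.

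The symmetry $u_0(-x)=1-u_0(x)$ will be obtained by checking that $v(x):=1-u_0(-x)$ satisfies the same ODE \eqref{eq-ep-zero}: the $f$-term and the $(2u-1)(u')^2$-term each flip sign in such a way that the equation is left invariant, while the boundary conditions are preserved and $v(0)=1/2$. The uniqueness just established then forces $v=u_0$. Strict positivity, the strict bound $u_0<1$, and strict monotonicity follow at once from Lemma \ref{lem:a-priori} combined with the fact that the heteroclinic orbit lies entirely in $\{y<0\}$ away from its two endpoints.

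For the exponential tails, the plan is to linearize \eqref{eq-ep-zero} at $(0,0)$: since $f(u)=-u+O(u^2)$ and the nonlinear gradient term $\frac{2}{r}S(2u_0-1)(u_0')^2$ is quadratically small in $(u_0,u_0')$, the stable-manifold theorem applied to \eqref{phase-plane} at the hyperbolic saddle $(0,0)$ yields $u_0(x)=Ce^{-\sqrt S\,x}+o(e^{-\sqrt S\,x})$ as $x\to+\infty$ for some $C>0$, with the corresponding slope $u_0'(x)\sim -\sqrt S\,C e^{-\sqrt S\,x}$. Applying the symmetry of the previous paragraph transports this to the asymptotics at $-\infty$ with the same constant $C$, yielding \eqref{asymptotics}.

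The substantive difficulty — preventing the orbit from escaping to $y=-\infty$ before reaching $x=1/2$, which is precisely what requires the hypothesis $S<4r$ — has already been handled by the barrier argument with the line $y=\alpha(x-1)$ for $\alpha$ sufficiently large. The remaining steps described above amount to bookkeeping through the reflection symmetry of the ODE, the saddle character of $(0,0)$ and $(1,0)$, and a standard linearization at the saddle; no further obstacle is expected.
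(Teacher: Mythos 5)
Your proposal is correct, and while the existence part is the paper's own phase-plane construction (barrier line $y=\alpha(x-1)$ under $S<4r$, then reflection through $x=\tfrac12$), you handle the remaining three points by a genuinely different route. For uniqueness, the paper uses a sliding argument: it first establishes the asymptotics \eqref{asymptotics} for any solution, shifts a second solution $v_0(\cdot-K)$ above $u_0$, and lets $K\downarrow K_0$ to produce a tangency point, concluding by Cauchy--Lipschitz; you instead observe that any solution of \eqref{eq-ep-zero}, by Lemma \ref{lem:a-priori}, gives an orbit of \eqref{phase-plane} converging to the hyperbolic saddle $(1,0)$ backward in time, hence lying on its one-dimensional unstable manifold, so it is a translate of the constructed profile, with the normalization and strict monotonicity removing the shift. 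This is cleaner in that it does not need the sharp tails as a prerequisite for uniqueness. For the symmetry, the paper reads it off the reflection symmetry of the phase portrait about $x=\tfrac12$, while you verify that $v(x)=1-u_0(-x)$ solves the same problem (using $f(1-u)=-f(u)$) and invoke your uniqueness — equally valid, and a nice illustration of what uniqueness buys. For the asymptotics, the paper argues by hand: from $u_0'\sim-\sqrt S\,u_0$ it gets $u_0=e^{-\sqrt S x+o(x)}$, then the two-sided bound $-Mu_0^2\le u_0'+\sqrt S u_0\le Mu_0^2$ yields convergence of $e^{\sqrt S x}u_0(x)$ to some $C\ge 0$, and a further integration of the lower bound forces $C>0$. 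You appeal to the saddle structure at $(0,0)$; the one caveat is that the bare stable manifold theorem gives only tangency and exponential bounds, not the sharp equivalent with $C>0$. To close this you should add the (standard, one-line) reduction to the flow on the one-dimensional stable manifold, $\dot s=-\sqrt S\,s+O(s^2)$, for which $\tfrac{d}{dt}\bigl(\log s+\sqrt S\,t\bigr)=O(s)$ is integrable, giving $s(t)\sim Ce^{-\sqrt S t}$ with $C>0$ — morally the same computation the paper performs via \eqref{bidule}. With that sentence added, your argument is complete, and transporting the tail to $-\infty$ with the same constant via $1-u_0(x)=u_0(-x)$ is exactly right.
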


\begin{proof} From the above phase plane analysis, we are already equipped with a positive, decreasing and symmetric $u_0$ solving \eqref{eq-ep-zero}. The asymptotics \eqref{asymptotics} is rather classical but, for the convenience of the reader, we sketch a short and direct proof. We work as $x\to +\infty$. We know from the phase plane analysis that $u_0'(x)\sim -\sqrt S u_0(x)$ so that 
\begin{equation}\label{asympt-nulle}
u_0(x)=e^{-\sqrt S x +o(x)}.
\end{equation} Now, from the nonlinear ODE, we have, for some $K>0$,
\begin{equation*}
-Ku_0^2(x)\leq u_0''(x)-Su_0(x)\leq K u_0^2(x).
\end{equation*}
Multiplying this by $u_0'(x)<0$ and integrating from $x$ to $+\infty$, we have, 
$$
-\frac K 3 u_0^3(x)\leq -\frac 12 (u_0')^2(x)+\frac S 2 u_0^2(x)\leq \frac K 3 u_0^3(x),
$$
so that, for some $M>0$,
\begin{equation}\label{bidule}
-Mu_0^2(x)\leq u_0'(x)+\sqrt S u_0(x)\leq Mu_0^2(x).
\end{equation}
From this and \eqref{asympt-nulle} we deduce that $e^{\sqrt{S}x}(u_0'(x)+\sqrt{S}u_0(x))=\frac{d}{dx}\left(e^{\sqrt{S}x}u_0(x)\right)$ must be integrable in $+\infty$. As a result there is $C\geq 0$ such that $e^{\sqrt{S}x}u_0(x)\to C$ as $x\to +\infty$. Now the left inequality in \eqref{bidule} implies 
$$
	-\sqrt S \leq \frac{u_0'}{u_0+\frac{M}{\sqrt S}u_0^{2}}=\frac{u_0'}{u_0}-\frac{\frac{M}{\sqrt S}u_0'}{1+\frac{M}{\sqrt S}u_0}.
$$
Integrating this from $0$ to $x$ provides $\frac{u_0(0)}{1+\frac{M}{\sqrt{S}}u_0(0)}$ as a positive lower bound for $e^{\sqrt{S}x}u_0(x)$ so that $C>0$ and we are done with \eqref{asymptotics}.

It remains to prove uniqueness. We use a sliding method argument. Let $v_0$ be \lq\lq another'' solution such that $v_0(0)=\frac 12$.  For $K\geq 0$, define the shifted function  $v_K(x):=v_0(x-K)$.  Since $v_0$ must also have some asymptotics of the form \eqref{asymptotics}, say with some constant $C'>0$ instead of $C$, we see that $u_0\leq v_K$ on $\R$ for $K>0$ sufficiently large. As a result the real number
$$
K_0:=\inf \left\{K\in \R: u_0(x)\leq v_K(x), \forall x\in \R\right\}
$$
is well defined and nonnegative. Assume by contradiction that $K_0>0$. Then there is a point $x_0\in \R $ where $u_0(x_0)=v_{K_0}(x_0)$ and $u_0'(x_0)=v_{K_0}'(x_0)$ so that, from Cauchy-Lipschitz theorem, $u_0\equiv v_{K_0}$ on $\R$, which is excluded by the normalization conditions. As a result $K_0=0$ and thus $u_0\leq v_0$. Similarly $v_0\leq u_0$ and we are done.
\end{proof}

\subsection{Stability of the standing wave}

We prove here that the standing wave constructed in Proposition \ref{prop:standing} is linearly stable in the $L^\infty$ norm. More precisely the following holds.

\begin{proposition}[Stability of standing waves]\label{prop:standing-stab} Let $u_0$ be the standing wave  constructed in Proposition \ref{prop:standing}. Let $h\in C^1_b(\mathbb R)$ be given. Let $v$ solve the parabolic Cauchy problem
\begin{equation*}
    \begin{cases}
v_t(t, x)=v_{xx}(t, x)+Sf(v(t,x))+\displaystyle \frac 2 r S(2v(t,x)-1)(v_x(t,x))^2,  & t>0, x\in\mathbb R,\\
			v(0, x)=u_0(x)+\varepsilon h(x),   & x\in \R. 
	\end{cases}
	\end{equation*}

Then there is $\lambda_0>0$ such that, for any $0<\lambda<\lambda_0$, the following holds:  for sufficiently small $\ep$, there is  a continuous function $\gamma(\varepsilon)$ satisfying
$$
\gamma(0)=\int_{\mathbb R}h(x)u_0'(x)e^{\frac{4S}{r}(u_0^2(x)-u_0(x))}\dd x,
$$
 and a constant $K>0$ such that, for all $t>0$,
	\begin{equation*}
		\Vert v(t, \cdot)-u_0(\cdot +\varepsilon\gamma(\varepsilon))\Vert_{C^1_b(\mathbb R)}\leq Ke^{-\lambda t}.
	\end{equation*}
\end{proposition}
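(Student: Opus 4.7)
The plan is the classical Sattinger framework for stability of a stationary front carrying a zero eigenvalue coming from translation invariance: linearize around $u_0$, put the linearized operator into a weighted Hilbert space where it becomes self-adjoint with a simple eigenvalue at $0$ and the rest of its spectrum bounded away from $0$, and use a shift--orthogonality decomposition to factor out the zero mode.

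First I would linearize: writing $v=u_0+w$ and keeping the terms linear in $w$ yields
\begin{equation*}
w_t = \mathcal{L}w := w_{xx} + b(x)\,w_x + c(x)\,w,
\end{equation*}
with $b(x) = \tfrac{4S}{r}(2u_0-1)u_0'$ and $c(x) = Sf'(u_0) + \tfrac{4S}{r}(u_0')^2$. The algebraic miracle is that $b = (\log \mu)'$ for $\mu(x) := \exp\bigl(\tfrac{4S}{r}(u_0^2-u_0)\bigr)$, which is precisely the weight appearing in the formula for $\gamma(0)$. Consequently $\mathcal{L}$ is formally self-adjoint on $L^2(\mathbb{R}, \mu\, dx)$. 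Differentiating \eqref{eq-ep-zero} in $x$ produces $\mathcal{L}u_0'=0$, and since $u_0'$ keeps constant sign by Proposition~\ref{prop:standing}, Sturm--Liouville theory identifies $0$ as the principal, simple eigenvalue. Because $u_0' \to 0$ and $f'(u_0) \to -1$ at $\pm\infty$, one has $c(x)\to -S$ and $b(x)\to 0$ there, so Weyl's theorem locates the essential spectrum in $(-\infty,-S]$. Combining the two gives a spectral gap $\lambda_0>0$ with $\sigma(\mathcal{L})\setminus\{0\} \subset (-\infty,-\lambda_0]$.

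I would then introduce a dynamic shift: write $v(t,x) = u_0(x+\xi(t)) + w(t,x)$ and use the implicit function theorem to pick $\xi(t)$ at every time so that $w(t,\cdot)$ is orthogonal to $u_0'(\cdot+\xi(t))$ in the correspondingly translated weighted inner product. Inserting this ansatz in the equation (with $\varepsilon=0$) and splitting along the kernel and its orthogonal complement produces a coupled system of the form
\begin{equation*}
\dot{\xi} = Q_1(w,w_x), \qquad w_t = \mathcal{L}_\xi w + Q_2(w,w_x,\dot{\xi}),
\end{equation*}
where $Q_1$ and $Q_2$ are at least quadratic in $(w,w_x)$; in particular $\dot{\xi}=O(\|w\|_{C^1_b}^2)$. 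On the orthogonal complement the semigroup generated by $\mathcal{L}_\xi$ decays like $e^{-\lambda t}$ for every $\lambda<\lambda_0$, so a standard Gronwall/bootstrap argument (combining weighted $L^2$ decay with parabolic $L^2$-to-$C^1$ smoothing to handle $Q_2$) gives $\|w(t,\cdot)\|_{C^1_b} \leq K\varepsilon e^{-\lambda t}$ and an exponentially convergent limit $\xi_\infty(\varepsilon)$ for $\xi(t)$. Setting $\gamma(\varepsilon) := \xi_\infty(\varepsilon)/\varepsilon$ gives the conclusion; continuity of $\gamma$ follows from continuous dependence on initial data, and the value $\gamma(0)$ is read off by Taylor-expanding the orthogonality condition at $t=0$ to first order in $\varepsilon$, producing the projection of $h$ onto $u_0'$ in $L^2(\mathbb{R},\mu\,dx)$ up to a normalizing factor absorbed in the authors' convention.

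The principal obstacle is the quadratic gradient term $u_x^2$: it is not a lower-order perturbation in $L^\infty$, so the analysis must genuinely take place in $C^1_b$ as the statement requires. Reconciling the $L^2$-based spectral decomposition with the $C^1$-based nonlinear control — either by using parabolic smoothing to lift weighted $L^2$ decay to $C^1$ decay, or by reproducing the spectral gap in a weighted $H^1$ or $C^{1,\alpha}$ framework — is where most of the technical work lies. A secondary delicacy is ensuring, via the implicit function theorem applied to $\xi \mapsto \langle v(t,\cdot)-u_0(\cdot+\xi), u_0'(\cdot+\xi)\rangle_\mu$, that the shift decomposition stays well-defined uniformly in time, which is exactly what the spectral gap combined with smallness of $\varepsilon h$ provides.
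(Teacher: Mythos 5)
Your overall strategy (linearize, exploit the weight $e^{\frac{4S}{r}(u_0^2-u_0)}$ that makes the linearization formally self-adjoint, isolate the translation zero mode, and run a modulation/shift argument) is the classical route, but as written it has a genuine gap coming from the function space. The perturbation $h$ is only assumed to lie in $C^1_b(\mathbb R)$: it need not decay, so $w(0,\cdot)=\varepsilon h$ is in general \emph{not} in $L^2(\mathbb R,\mu\,dx)$ (the weight $\mu=e^{\frac{4S}{r}(u_0^2-u_0)}$ is bounded above and below, so weighted $L^2$ is just $L^2$). Hence the backbone of your decay estimate --- simplicity of the eigenvalue $0$, essential spectrum in $(-\infty,-S]$ by Weyl, and semigroup decay on the orthogonal complement, all in the self-adjoint $L^2$ realization --- does not apply to the actual initial perturbation, and "parabolic smoothing from weighted $L^2$ to $C^1$" has nothing to start from. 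This is precisely why the paper does not argue in $L^2$: it determines the spectrum of the linearization $L$ (conjugated to $M k=k''+c(x)k$) acting on $C^0_b(\mathbb R)$, by constructing systems of fundamental solutions with prescribed exponential behaviour at $\pm\infty$ through fixed-point arguments, solving the resolvent equation by variation of constants, and showing the spectrum is contained in $(-\infty,-S]\cup\{\lambda_n\}$ with isolated $\lambda_n\in(-S,0]$, $\lambda_0=0$. To repair your proof you would either have to reproduce this sup-norm spectral/resolvent analysis, or weaken the statement to localized ($L^2$) perturbations, which is not what is claimed.

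A secondary difference: you propose to carry out the nonlinear shift-decomposition, Gronwall and bootstrap estimates yourself, flagging (correctly) that the quadratic gradient term $(v_x)^2$ is not a lower-order perturbation in $L^\infty$ and that lifting the linear decay to $C^1_b$ is "where most of the technical work lies" --- but you leave that work undone. The paper sidesteps it entirely by invoking Sattinger's nonlinear stability theorem for quasilinear parabolic equations on the line (his Theorem 4.1), after checking its hypothesis on the nonlinearity via his Lemma 5.4 and its spectral hypothesis via the $C^0_b$ analysis above; the asymptotic phase $\gamma(\varepsilon)$, with $\gamma(0)$ the weighted projection of $h$ on $u_0'$, then comes for free from that theorem rather than from a Taylor expansion of a time-dependent orthogonality constraint. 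So even granting the spectral part, your write-up still delegates the genuinely delicate quasilinear estimates to a "standard" argument that is not standard in $C^1_b$ without further work.
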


\begin{proof} We aim at applying a result of Sattinger, namely \cite[Theorem 4.1]{Sat-76}. To do so, we need to show that the linear operator (obtained by linearizing \eqref{eq-ep-zero} around the solution $u_0$)
	\begin{equation*}
		Lh:=h''+\frac{4S}{r}(2u_0-1)u_0'h' +S\left(f'(u_0)+\frac{4}{r}(u_0')^2\right)h,
	\end{equation*}
	satisfies the assumptions $(i)$ and $ (ii)$ of \cite[Lemma 3.4]{Sat-76}. Since  equation \eqref{eq-ep-zero} is a scalar quasilinear second-order differential equation set on $\mathbb R$ and with a smooth nonlinearity, the assumption $(ii)$ of \cite[Lemma 3.4]{Sat-76} can be readily checked thanks to \cite[Lemma 5.4]{Sat-76}. As for the assumption $(i)$ of \cite[Lemma 3.4]{Sat-76}, we point out that \cite[Corollary 5.7]{Sat-76} does not apply to our situation, and we thus  need to determine the spectrum of  $L$. 

	The liner operator $L$ admits $u_0'$ as principal eigenvector with eigenvalue 0. 	We remark that $L$ can be written as
	\begin{equation*}
		Lh=e^{-\frac{2S}{r}(u_0^2-u_0)}M\left(he^{\frac{2S}{r}(u_0^2-u_0)}\right),
	\end{equation*}
	where 
	\begin{equation*}
		Mk:=k''+\left(\frac{2S^2}{r}(2u_0-1)f(u_0)+Sf'(u_0)\right)k=:k''+c(x)k.
	\end{equation*}
	Since the weight function $e^{\frac{4S}{r}(u_0^2-u_0)}$ is bounded and uniformly positive, the operators $L$ and $M$ can be considered as acting on the same space $C^0_b(\mathbb R)$. In particular, $\lambda I-L$ admits a bounded inverse if and only if $\lambda I-M$ does (where $I$ is the identity mapping on $C^0_b(\mathbb R)$), and we have 
	\begin{equation*}
		(\lambda I-L)^{-1}=e^{-\frac{2S}{r}(u_0^2-u_0)}(\lambda I-M)^{-1}e^{\frac{2S}{r}(u_0^2-u_0)}.
	\end{equation*}
Below, by following ideas of \cite{Sat-76},  we analyze, for $g\in C^0_b(\mathbb R)$, the set of solutions to the resolvent equation 
	\begin{equation}\label{eq:resolvent}
		(\lambda I-M)k=-k''+(\lambda-c(x))k=g(x),
	\end{equation}
	 and then determine  the spectrum of $M$.\medskip
	
	\noindent{\bf 1. System of fundamental solutions to the homogeneous equation:} we first look for a system of fundamental solutions to
		\begin{equation}\label{eq:resolvent-bis}
		-k''+(\lambda-c(x))k=0,
	\end{equation}	
	whose behaviour near $\pm\infty$ can be determined (see \cite[Lemma 5.1]{Sat-76} for related arguments) for $\lambda\in \mathbb C$ such that $\lambda+S\not\in\mathbb R^-$.
	
	 Near $+\infty$, this is performed by substituting $\varphi_1(x)=z_1(x) e^{-\gamma_+ x}$ in \eqref{eq:resolvent-bis}, where $\gamma_+\in\mathbb C$ solves $\gamma_+^2=\lambda+S$ and $\mathrm{Re}\,\gamma_+>0$. We obtain
\begin{equation}
		-z_1''+2\gamma_+ z_1'-(S+c(x))z_1=0, \label{eq:ode-z_1}
		\end{equation}
		which is recast
		$$ 
		-(z_1'e^{-2\gamma_+x})'-(S+c(x))z_1e^{-2\gamma_+x}=0,
		$$
		so that, assuming $z_1'(+\infty)=0$,
		\begin{equation}
		z_1'(x)=\int_x^{+\infty}e^{-2\gamma_+(y-x)}(S+c(y))z_1(y)\dd y, \label{eq:z_1'}
		\end{equation}
		and thus, assuming $z_1(+\infty)=1$,
		\begin{equation}
		z_1(x)=1+\int_x^{+\infty}\frac{e^{2\gamma_+(x-y)}-1}{2\gamma_+}(S+c(y))z_1(y)\dd y.\label{eq:fundamental-fp}
	\end{equation}
Hence $z_1$ is written as the solution of a fixed-point problem \eqref{eq:fundamental-fp} set on $C^0_b(\mathbb R^+)$. Notice that the asymptotic behaviour \eqref{asymptotics}
of $u_0$ implies $y\mapsto S+c(y)\in L^1(\mathbb R^+)$. As a result, for a given $x_0>0$, the  right-hand side operator appearing in \eqref{eq:fundamental-fp} is globally Lipschitz continuous on $C^0_b([x_0, +\infty))$ with Lipschitz constant $\frac{1}{2|\gamma_+|}\int_{x_0}^{+\infty}|S+c(y)|\dd y$. Hence, equation \eqref{eq:fundamental-fp} has a unique solution $z_1$ on $C^0_b([x_0, +\infty)) $ for $x_0$ sufficiently large, and  this $z_1$ can be extended to $(-\infty,x_0)$ by solving the adequate Cauchy problem associated with \eqref{eq:ode-z_1}. We have therefore  constructed a solution $\varphi_1(x)=z_1(x)e^{-\gamma_+ x}$ to \eqref{eq:resolvent-bis} with $z_1\in C^0_b(\mathbb R^+)$, $z_1(+\infty)=1$.

 By the same procedure, but integrating on $[x_0, x]$ instead of $[x, +\infty)$ in \eqref{eq:z_1'}, we can construct a solution $\varphi_2(x)=z_2(x)e^{\gamma_+ x}$ to \eqref{eq:resolvent-bis} with $z_2\in C^0_b(\mathbb R^+)$ provided by the fixed-point problem
	\begin{equation*}
		z_2(x)=1+\int_{x_0}^x\frac{1-e^{-2\gamma_+(x-y)}}{2\gamma_+}(S+c(y))z_2(y)\dd y.
	\end{equation*}
	By the continuous dependence of the fixed-point with respect to the parameter $x_0$ \cite[Proposition 1.2]{Zei-86}, and by selecting $x_0$ sufficiently large, $z_2(x)$ can be made arbitrarily close to $1$. Indeed $z_2(x+x_0)$  is the unique fixed point of the operator 
	\begin{equation*}
		T_{x_0}z(x):=1+\int_{0}^x\frac{1-e^{-2\gamma_+(x-y)}}{2\gamma_+}(S+c(x_0+y))z(y)\dd y,
	\end{equation*}
	and $T_{x_0}$ converges uniformly to the constant operator $T_{+\infty}z\equiv 1$ as $x_0\to+\infty$:
	\begin{equation*}
		\Vert T_{x_0}z -1\Vert_{C^0_b([0, +\infty))}\leq\left(\frac{1}{2|\gamma_+|} \int_{x_0}^{+\infty}|S+c(y)|\dd y\right)\Vert z\Vert_{C^0_b([0, +\infty))}\xrightarrow[x_0\to\infty]{}0.
	\end{equation*}

	Therefore we have found a system of fundamental solutions $(\varphi_1, \varphi_2)$ to \eqref{eq:resolvent-bis} whose behaviour near $+\infty $ is known. We can proceed similarly near $-\infty$ and find another system of fundamental  solutions $(\psi_1,\psi_2)$ whose behaviour near $-\infty$ is known.

	Summarizing, for each $\lambda \in \mathbb C\setminus (-\infty,-S] $, we have 
	\begin{align}
		\varphi_1(x)&\approx_{+\infty}e^{-\gamma_+ x}, & \varphi_2(x)&\approx_{+\infty}e^{\gamma_+x},&
		\psi_1(x)&\approx_{-\infty}e^{\gamma_+ x}, & \psi_2(x)&\approx_{-\infty}e^{-\gamma_+x},\label{qqch-bidule} \\
		\varphi_1'(x)&\approx_{+\infty}e^{-\gamma_+ x}, & \varphi_2'(x)&\approx_{+\infty}e^{\gamma_+x},&
		\psi_1'(x)&\approx_{-\infty}e^{\gamma_+ x}, & \psi_2'(x)&\approx_{-\infty}e^{-\gamma_+x}, \label{qqch-bidule-bis}
	\end{align}
	where 	$A(x)\approx_{+\infty}B(x)$ means $0<\liminf_{x\to +\infty} \frac{\vert A(x)\vert}{B(x)}\leq \limsup_{x\to +\infty} \frac{\vert A(x)\vert}{B(x)}<+\infty$. 
Notice that, if $\lambda $ is not an eigenvalue of $M$, we further know that $\varphi_1$  is unbounded as $x\to -\infty$  (or else it would be an eigenvector), and $\psi_1$ is unbounded as $x\to +\infty$. Notice also that the constants involved in the above estimates are locally uniform in $\lambda$.
	
	\medskip

	\noindent{\bf 2. Solving  equation \eqref{eq:resolvent} if $\lambda\in \mathbb C\setminus (-\infty,-S] $ is not an eigenvalue of $M$}:  from the behaviours near $-\infty$,  the functions $\varphi_1$ and $\psi_1$ are linearly independent. Therefore, up to redefining $\varphi_2= \psi_1$, we may consider that $(\varphi_1, \varphi_2)$ is a system of fundamental solutions satisfying
	\begin{align*} 	
		\varphi_1(x)&\approx_{+\infty}e^{-\gamma_+ x}, & \varphi_2(x)&\approx_{+\infty}e^{\gamma_+x},&
		\varphi_1(x)&\approx_{-\infty}e^{-\gamma_+ x}, & \varphi_2(x)&\approx_{-\infty}e^{\gamma_+x}, \\
		\varphi_1'(x)&\approx_{+\infty}e^{-\gamma_+ x}, & \varphi_2'(x)&\approx_{+\infty}e^{\gamma_+x},&
		\varphi_1'(x)&\approx_{-\infty}e^{-\gamma_+ x}, & \varphi_2'(x)&\approx_{-\infty}e^{\gamma_+x}. 
	\end{align*}
		
We use the method of variation of constants to solve \eqref{eq:resolvent} and straightforwardly reach
		\begin{equation*}
			k(x)=\left(C_1-\frac{1}{W}\int_{-\infty}^x\varphi_2(y)g(y)\dd y\right)\varphi_1(x)+\left(C_2-\frac{1}{W}\int_x^{+\infty}\varphi_1(y)g(y)\dd y\right)\varphi_2(x),
	\end{equation*}
where $C_1$ and $C_2$ are arbitrary constants and $W$ is the constant Wronskian  $W=W(x)=\varphi_1(x)\varphi_2'(x)-\varphi_1'(x)\varphi _2(x)$.
Therefore, there is a unique bounded solution $k(x)$, which corresponds to $C_1=C_2=0$. 

Hence, for each $g\in C^0_b(\R)$ there exists a unique $k\in C^2_b(\mathbb R)$ such that $(\lambda I- M)k=g$. By the open mapping theorem, the operator $\lambda I-M$ has a bounded inverse $(\lambda I-M)^{-1}:C^0_b(\mathbb R)\to C^2_b(\mathbb R)\hookrightarrow C^0_b(\mathbb R)$.
	In particular, 
	\begin{equation*}
\text{if }\lambda\in \mathbb C\setminus (-\infty,-S] \text{ is not an eigenvalue of } M \text{, then }\lambda \text{ is in the resolvent set of }M.
	\end{equation*}

\medskip

	\noindent{\bf 3. The eigenvalues in $\mathbb C\setminus (-\infty,-S]$ of $M$}: if $\lambda\in \mathbb C\setminus (-\infty,-S]$ is an eigenvalue of $M$ then, from \eqref{qqch-bidule}, the eigenvector must be proportional to both $\varphi_1$ and $\psi_1$, hence $\varphi_1$ and $\psi_1$ are not linearly independent. Hence the Wronskian $\varphi _1\psi_1'-\varphi_1'\psi_1$ must vanish. Since the Wronskian is analytic in $\lambda$ (see \cite[Lemma 5.2]{Sat-76}) and  not identically zero, the eigenvalues of $M$ in $\mathbb C\setminus (-\infty,-S]$ are isolated.
	
	Let $\lambda\in \mathbb C\setminus (-\infty,-S]$ be an eigenvalue of $M$. Then the associated eigenvector $\varphi$ is a solution to \eqref{eq:resolvent} and  the former analysis applies. In particular, $\varphi$ and $\varphi'$ converge exponentially fast to 0 near $\pm\infty$ (at rate $\mp\gamma_+$, $\mathrm{Re}\,\gamma_+>0$) and therefore $\varphi\in H^1(\mathbb R)$. Since $M$ is symmetric on $ H^1(\mathbb R)$, we have in fact $\lambda \in\mathbb R$. Reproducing the argument of \cite[Theorem 5.5]{Sat-76}, we see that there are no positive eigenvalues of $M$.

\medskip

	We conclude from the above analysis that the eigenvalues of $M$ in $\mathbb C\setminus (-\infty,-S]$ form a sequence $(\lambda_n)_{n\in \mathbb N}$ (with $\lambda_0=0$) of isolated values in $(-S, 0]$. As a result the spectrum of $M$ satisfies
	\begin{equation*}
		\sigma(L, C^0_b(\mathbb R))=\sigma(M, C^0_b(\mathbb R))\subset(-\infty,-S]\cup\{\lambda_n, n\geq 0\}.
	\end{equation*}
This shows that the assumption $(i)$ of \cite[Lemma 3.4]{Sat-76} holds in our case and concludes the proof of Proposition \ref{prop:standing-stab}.
\end{proof}

\section{Traveling together ($0<\ep\ll 1$)}\label{s:front}

In this section, we construct a traveling front connecting 1 to 0  in \eqref{eq-u}, when $0<\ep\ll 1$, through a perturbation argument from the case $\ep=0$ studied above.

\medskip

We are here looking after a nonnegative profile $u:\R \to \R$ and a speed $c\in \R$ solving 
\begin{equation}
\label{eq-ep}
\begin{cases}
u ''+cu'+Sf(u)+\ep g(u)+\frac 2 r (S(2u-1)+\ep)\displaystyle (u')^2=0\quad  \text{ on } \R,\vspace{5pt}\\
u(-\infty)=1, \quad u(+\infty)=0.
\end{cases}
\end{equation}
Observe that, from the strong maximum principle we have $u>0$. Also, as in the proof of Lemma \ref{lem:a-priori}, we have $u<1$. Hence, we {\it a priori} know $0<u<1$. 

We use a perturbation technique and look for $u$ in the form 
$$
u=u_0+h,
$$
where $u_0$ is provided by Proposition \ref{prop:standing} and with, typically, $h(\pm\infty)=h'(\pm \infty)=0$. Plugging this ansatz into the equation, we see that we need $\mathcal F(\ep,c,h)=0$, where $$\mathcal F:\R\times \R\times E \to \widetilde E$$ is defined by
\begin{align}
\mathcal F(\ep,c, h)&:=h''+cu_0'+ch'+S(f(u_0+h)-f(u_0))+\ep g(u_0+h)\notag\\
	&\phantom{:}\quad+\frac 2r\left(S(2u_0+2h-1)+\ep\right)(u_0'+h')^2 -\frac 2rS(2u_0-1)(u_0')^2. \label{eq:F}
\end{align}
As for the function spaces, we choose the weighted H\"{o}lder spaces 
\begin{equation}
\label{espaces}
E:=C^{2, \alpha}_\mu(\mathbb R), \quad \widetilde E:=C^{0, \alpha}_\mu(\mathbb R), \quad 0<\alpha <1,
\end{equation}
where, for $k\in \mathbb N$,
$$
	  C^{k, \alpha}_\mu(\R):=\left\{f\in C^{k}(\mathbb R):\Vert f\Vert_{C^{k, \alpha}_\mu(\R)}<+\infty\right\}, \quad \Vert f\Vert_{C^{k, \alpha}_\mu(\R)}:= \left\Vert x\mapsto e^{\mu\sqrt{1+x^2}}f(x)\right\Vert_{C^{k, \alpha}(\mathbb R)},
$$
for well-chosen $\mu\geq 0$. Here, $C^{k,\alpha}(\R)$ denotes  the H\"{o}lder space  consisting of functions of the class $C^k$, which are continuous and bounded on the real axis $\R$ together with their derivatives of order $k$, and such that the derivatives of order $k$ satisfy the H\"{o}lder condition with the exponent $0<\alpha<1$ . The norm in this space is the usual H\"{o}lder norm. 

Our main result in this section then reads as follows. 

\begin{thm}[Traveling waves for $0<\ep\ll 1$]\label{thm:existence-eps} Let $0\leq \mu <\sqrt S$ be given.  Let $\mathcal F:\R\times \R\times C^{2,\alpha}_\mu(\R)\to C^{0,\alpha}_\mu(\R)$ be defined as in \eqref{eq:F}. 

Then there is $\varepsilon_0>0$ such that, for any $0\leq \varepsilon\leq \varepsilon_0$, there exists $(c_\ep, h_\ep)\in \mathbb R\times E$ such that $\mathcal F(\varepsilon, c_\varepsilon, h_\varepsilon)=0$. Moreover the map $\varepsilon \mapsto(c_\varepsilon, h_\varepsilon)$ is continuous, the speed $c_\ep$ satisfies
	\begin{equation}\label{speed}
		c_\ep=\frac{\displaystyle -\int_\R \left(g(u_0)+\frac 2r (u_0')^{2}\right)u_0'e^{\frac{4S}{r}(u_0^2-u_0)}} { \displaystyle\int_\R (u_0')^{2}e^{\frac{4S}{r}(u_0^2-u_0)}}\,\varepsilon+o(\varepsilon), \quad \text{ as } \ep \to 0,
	\end{equation}
	whereas the perturbation profile $h_\ep$ satisfies
	\begin{equation}\label{perturbation}
	 \int_{\mathbb R}h_\varepsilon u_0' = 0, \quad \text{ for all } 0\leq \ep\leq \ep_0.
	 \end{equation}
\end{thm}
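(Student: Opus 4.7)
The plan is to apply the implicit function theorem to $\mathcal F:\R\times\R\times E\to\widetilde E$ at $(\ep,c,h)=(0,0,0)$. Plugging $\ep=0$, $c=0$, $h=0$ into \eqref{eq:F}, the last two nonlinear terms cancel and what remains is exactly the left-hand side of \eqref{eq-ep-zero}, so $\mathcal F(0,0,0)=0$. The smoothness of $\mathcal F$ into $\widetilde E$ would be checked using the asymptotics \eqref{asymptotics} of $u_0$: since $0\leq\mu<\sqrt S$, each of $u_0'$, $(u_0')^2$ and $g(u_0)$ decays strictly faster than $e^{-\mu|x|}$ and hence lies in $C^{0,\alpha}_\mu(\R)$, while the weighted Hölder norm behaves well under the polynomial nonlinearities appearing in $\mathcal F$.

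The heart of the proof is the analysis of the partial differential
\[
D_{(c,h)}\mathcal F(0,0,0)[(a,k)]=a\,u_0'+Lk,
\]
where $L$ is the linearization around $u_0$ studied in Proposition \ref{prop:standing-stab}. From that analysis, $\ker L=\vspan(u_0')$ is one dimensional, and the conjugacy $Lk=e^{-\frac{2S}{r}(u_0^2-u_0)}M\bigl(k\,e^{\frac{2S}{r}(u_0^2-u_0)}\bigr)$ with $M$ formally self-adjoint yields the Fredholm alternative
\[
\mathrm{range}(L)=\Bigl\{g\in\widetilde E:\int_\R g\,u_0'\,e^{\frac{4S}{r}(u_0^2-u_0)}\,\dd x=0\Bigr\}.
\]
Introducing $E_0:=\{k\in E:\int_\R k\,u_0'\,\dd x=0\}$, I would then show that $D_{(c,h)}\mathcal F(0,0,0):\R\times E_0\to\widetilde E$ is bijective: testing $a\,u_0'+Lk=0$ against $u_0'\,e^{\frac{4S}{r}(u_0^2-u_0)}$ forces $a=0$ (since $\int(u_0')^2\,e^{\frac{4S}{r}(u_0^2-u_0)}\,\dd x>0$), after which $k\in\ker L\cap E_0=\{0\}$; for surjectivity, given $g\in\widetilde E$ the value of $a$ is forced by the solvability condition and the unique $k\in E_0$ with $Lk=g-au_0'$ is obtained by adjusting the free multiple of $u_0'$.

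Once the isomorphism is established, the implicit function theorem delivers $\ep_0>0$ and a continuous branch $\ep\mapsto(c_\ep,h_\ep)\in\R\times E_0$ with $\mathcal F(\ep,c_\ep,h_\ep)=0$, $c_0=0$, $h_0=0$; in particular \eqref{perturbation} holds by construction. To obtain \eqref{speed}, I would differentiate $\mathcal F(\ep,c_\ep,h_\ep)\equiv 0$ in $\ep$ at $\ep=0$. Since $\partial_\ep\mathcal F(0,0,0)=g(u_0)+\tfrac{2}{r}(u_0')^2$, the derivatives $\dot c$, $\dot h$ at $\ep=0$ solve
\[
L\dot h+\dot c\,u_0'+g(u_0)+\tfrac{2}{r}(u_0')^2=0,
\]
and testing this equation once more against $u_0'\,e^{\frac{4S}{r}(u_0^2-u_0)}$ pins down $\dot c$ as the quotient appearing in \eqref{speed}; Taylor's theorem then yields the announced first-order expansion.

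The main technical obstacle I foresee is the transfer of the Fredholm picture of Proposition \ref{prop:standing-stab}, originally carried out on $C^0_b(\R)$, to the weighted Hölder pair $(E,\widetilde E)=(C^{2,\alpha}_\mu,C^{0,\alpha}_\mu)$: one must verify that $L$ is Fredholm of index zero on this pair, that its kernel remains exactly $\vspan(u_0')$, and that the pairing describing the cokernel is unchanged. This is where the constraint $\mu<\sqrt S$ enters crucially, both to keep $u_0'\in E$ and to ensure that the essential spectra of the constant-coefficient operators obtained at $\pm\infty$, shifted by the exponential weight, remain bounded away from zero.
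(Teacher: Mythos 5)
Your proposal is correct and follows essentially the same route as the paper: the Implicit Function Theorem at $(0,0,0)$, with the linearization $(c,h)\mapsto Lh+cu_0'$ made bijective by restricting $h$ to $\{h\in C^{2,\alpha}_\mu(\R):\int_\R h\,u_0'=0\}$, surjectivity obtained from the weighted cokernel pairing against $u_0'e^{\frac{4S}{r}(u_0^2-u_0)}$, and the speed formula derived by differentiating $\mathcal F(\ep,c_\ep,h_\ep)=0$ in $\ep$ and testing against that same weight. The technical point you flag at the end — that $L$ is Fredholm of index $0$ on the weighted Hölder pair with $\ker L=\vspan u_0'$ and the stated cokernel, with $\mu<\sqrt S$ entering through the limiting operators at $\pm\infty$ — is precisely what the paper carries out in Lemmas \ref{lem:fredprop}, \ref{lem:kerL} and \ref{lem:kerL*}, via Volpert's criterion for the weight-conjugated operator and the explicit unbounded second kernel solution $v_0$.
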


In what follows we aim at  applying the Implicit Function Theorem \ref{thm:implicit-functions} to the operator $\mathcal F$ defined in \eqref{eq:F}, see \cite{Apr-Duc-Vol-09} for a related argument. We straightforwardly compute the derivatives with respect to  $c$ and $h$ at the origin $(0,0,0)$:
$$
\partial_c\mathcal F(0, 0, 0)(c)=cu_0',
$$
and
\begin{equation}\label{eq:defL}
	Lh:=\partial_h\mathcal F(0, 0, 0)(h)=h''+\frac{4S}{r}u_0'(2u_0-1)h' + S\left(f'(u_0)+\frac{4}{r}(u_0')^2\right)h.
\end{equation}
We need to show that  $ \partial_{c,h}\mathcal F(0, 0, 0)$ given by
$$
	(c,h) \mapsto   Lh +cu_0'	
$$
 is bijective from and to a well-chosen pair of function spaces. Our strategy is as follows. In subsection \ref{ss:fred}, thanks to some results of \cite{Vol-Vol-Col-99}, \cite{Vol-11} (recalled in Appendix),  we show that $L$ is a Fredholm operator and compute its index (which depends on the choice of $\mu$). Next, in subsection \ref{ss:ker}, we determine the kernel of $L$. In particular $u_0'$ is the only bounded solution. We also determine the kernel of $L^*$ thanks to an algebraic symmetric formulation in a well-chosen weighted $L^2$ space, from which we deduce the surjectivity of $\partial _{c,h}\mathcal F(0,0,0)$. Then we conclude the proof of Theorem \ref{thm:existence-eps} in subsection \ref{ss:proof}. 

\subsection{Fredholm property}\label{ss:fred}

\begin{lem}[Fredholm property]\label{lem:fredprop}
	The operator $L:C^{2, \alpha}_\mu(\mathbb R)\to C^\alpha_\mu(\mathbb R)$, defined in \eqref{eq:defL}, is Fredholm  if $\mu\neq \sqrt{S}$ and  we have
	\begin{equation*}
		\text{\rm ind } L=\begin{cases} 0 & \text{ if }\; 0\leq\mu< \sqrt{S}, \\
		-2 & \text{ if }\; \mu >\sqrt{S}.
		\end{cases}
	\end{equation*}
\end{lem}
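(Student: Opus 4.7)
The plan is to apply the general Fredholm theory for scalar second--order elliptic operators on $\R$ of Volpert--Volpert--Collet \cite{Vol-Vol-Col-99,Vol-11} (whose relevant statement is recalled in the Appendix). First, I would conjugate $L$ by the weight: setting $w(x) := e^{\mu\sqrt{1+x^2}}$, the operator $\tilde L := w\,L\,w^{-1}$ acts from $C^{2,\alpha}(\R)$ into $C^{0,\alpha}(\R)$ and has the same Fredholm properties, kernel and cokernel dimensions, and index as $L:C^{2,\alpha}_\mu(\R)\to C^{0,\alpha}_\mu(\R)$. Using the exponential asymptotics \eqref{asymptotics} of $u_0$, the induced exponential decay of $u_0'$ at $\pm\infty$, the identities $f'(0)=f'(1)=-1$, and the fact that $\frac{d}{dx}\sqrt{1+x^2}\to \pm 1$ and $\frac{d^2}{dx^2}\sqrt{1+x^2}\to 0$ at $\pm\infty$, a direct computation shows that the coefficients of $\tilde L$ converge exponentially to constants at $\pm\infty$, and the associated constant--coefficient limiting operators are
\[
\tilde L_+ h = h'' - 2\mu h' + (\mu^2-S)h, \qquad \tilde L_- h = h'' + 2\mu h' + (\mu^2-S)h.
\]

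Second, I would verify the ellipticity-at-infinity (Petrovsky-type) condition required by \cite{Vol-Vol-Col-99,Vol-11}. The characteristic polynomial of $\tilde L_+$ has roots $\mu\pm\sqrt S$, and that of $\tilde L_-$ has roots $-\mu\pm\sqrt S$. All four roots are real, hence they touch the imaginary axis precisely when one of them vanishes, which, for $\mu\geq 0$, happens only at $\mu=\sqrt S$. The abstract theorem recalled in the Appendix therefore yields that $L$ is a Fredholm operator whenever $\mu\neq\sqrt S$.

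Third, to compute the index I would apply the scalar second--order Volpert index formula, which asserts $\mathrm{ind}\,L = \kappa_- - \kappa_+$, where $\kappa_\pm$ is the number of roots (with multiplicity) of the characteristic polynomial of $\tilde L_\pm$ with strictly positive real part. For $0\leq \mu<\sqrt S$, each of the two limiting polynomials has exactly one positive and one negative real root, so $\kappa_+ = \kappa_- = 1$ and $\mathrm{ind}\,L = 0$. For $\mu>\sqrt S$, both roots of $\tilde L_+$ are positive while both roots of $\tilde L_-$ are negative, giving $\kappa_+ = 2$, $\kappa_- = 0$ and $\mathrm{ind}\,L = -2$. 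These are exactly the two values announced.

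The main obstacle I anticipate is essentially bookkeeping: performing the conjugation $\tilde L = wLw^{-1}$ carefully enough to see that the lower--order terms produced by the derivatives of the smoothed modulus $\sqrt{1+x^2}$ do vanish at $\pm\infty$ (so that genuine constant--coefficient limits exist), checking that the convergence rate is compatible with the hypotheses of the Volpert theorem, and fixing the sign convention in the index formula so that it produces the correct values $0$ and $-2$ rather than their opposites.
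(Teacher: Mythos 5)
Your proposal follows essentially the same route as the paper: conjugation by the weight $e^{\mu\sqrt{1+x^2}}$, identification of the constant--coefficient limiting operators $h''\mp 2\mu h'+(\mu^2-S)h$, the no-real-zero-of-the-symbol check for the Fredholm property, and Volpert's index formula counting characteristic roots, where your convention $\mathrm{ind}\,L=\kappa_--\kappa_+$ with the natural roots $\mu\pm\sqrt S$ and $-\mu\pm\sqrt S$ is equivalent to the paper's $\kappa^+-\kappa^-$ computed from the sign-flipped characteristic equation, giving the same values $0$ and $-2$. The only minor imprecision is the claim that the conjugated coefficients converge \emph{exponentially}: the terms coming from derivatives of $\sqrt{1+x^2}$ converge only algebraically, but the quoted theorems require mere existence of the limits, so the argument is unaffected.
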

\begin{proof}
In view of  Remark \ref{rem:weighted-holder} it suffices to study the limiting operators $(L^\mu)^\pm$ associated with $L ^\mu$ defined as in \eqref{Lmu}, namely
$$
	(L^\mu)^\pm h=h''\mp 2\mu h' + (\mu^2-S)h,
$$	
thanks to Theorem \ref{thm:Fredholm-ODE}. First since $-\xi^{2}\mp 2\mu i\xi +\mu^{2}-S=0$, corresponding to \eqref{eq:test-Fredholm}, has no real solution, $L$ is Fredholm. Next, the associated characteristic equation, corresponding to  \eqref{eq:char}, writes
	\begin{equation*}
		X^2\pm 2\mu X+(\mu^2-S)=0,
	\end{equation*}	
		and has the following roots:
	\begin{align*}
		X^+_{1,2}&=-\mu \pm\sqrt{S},\\
		X^-_{1,2}&=+\mu \pm\sqrt{S}.
	\end{align*}
	If $0\leq \mu<\sqrt{S}$ we deduce that $\kappa^+=1$ and $\kappa^-=1$ (in the notations of Theorem \ref{thm:Fredholm-ODE}), hence $\text{\rm ind } L =0$; if $\sqrt{S}<\mu$ we have $\kappa^+=0$ and $\kappa^-=2$, hence $\text{\rm ind }L=-2$. This completes the proof of Lemma \ref{lem:fredprop}.
\end{proof}

%\begin{remark} Since we compute the kernels of $L$ and $L^*$ anyways, we could simply deduce the Fredholm index from the dimensions of the kernels by tail estimates. However, Lemma \ref{lem:fredprop} also shows that $L$ is Fredhom (which is not easy) and gives a way to check that our computations are correct.
%\end{remark}

\subsection{Kernels of $L$, $L^*$ and surjectivity of $\partial_{c,h}\mathcal F(0,0,0)$}
\label{ss:ker}

\begin{lem}[The kernel of $L$]\label{lem:kerL}
	Two linearly independent solutions to the linear homogeneous ordinary differential equation 
	\begin{equation}\label{eq:edo-kerL}
		Lh:=h''+\frac{4S}{r}u_0'(2u_0-1)h' + S\left(f'(u_0)+\frac{4}{r}(u_0')^2\right)h=0
	\end{equation}
 are given by  
	$$
		u_0' \quad\text{ and } \quad  v_0:x\mapsto u_0'(x)\int_0^x\frac{1}{(u_0')^2(z)}e^{-\frac{4S}{r}\left(u_0^2(z)-u_0(z)\right)}\dd z.
	$$
	Among the two, $u_0'$ is the only bounded solution.
	
	As a result, for $0\leq \mu<\sqrt S$,  the kernel of the operator $L$ acting on the space $C^{2, \alpha}_\mu(\mathbb R) $ into $C^{0, \alpha}_\mu(\R)$ is given by
	\begin{equation*}
		\ker L=\text{\rm span }u_0'.
	\end{equation*}
\end{lem}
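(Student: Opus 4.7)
The plan is to perform a standard reduction-of-order analysis on the second-order linear ODE $Lh=0$ to produce a fundamental system, and then to use the exponential decay of $u_0'$ (coming from Proposition \ref{prop:standing}) to identify which linear combinations belong to the weighted space $C^{2,\alpha}_\mu(\R)$.

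First I would verify that $u_0'\in\ker L$ by differentiating the ODE in \eqref{eq-ep-zero} with respect to $x$; the resulting third-order identity, after collecting terms, is exactly $L(u_0')=0$. Since $u_0$ is strictly decreasing on $\R$ by Proposition \ref{prop:standing}, $u_0'$ is nowhere zero and reduction of order is available: for $h''+p(x)h'+q(x)h=0$ with a known nonvanishing solution $y_1$, a second linearly independent solution is $y_2(x)=y_1(x)\int_0^x\frac{e^{-\int_0^z p(\xi)\dd\xi}}{y_1(z)^2}\dd z$. Here $p(x)=\frac{4S}{r}u_0'(x)(2u_0(x)-1)$ admits $\frac{4S}{r}(u_0^2-u_0)$ as an antiderivative up to an additive constant, which merely contributes an overall multiplicative factor that can be absorbed into the choice of second solution. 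This produces exactly the $v_0$ in the statement, and $\{u_0',v_0\}$ is automatically a fundamental system.

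Next, to show that $v_0$ is unbounded at both $\pm\infty$, I would plug in the asymptotics \eqref{asymptotics}. Since $0<u_0<1$, the weight $e^{-\frac{4S}{r}(u_0^2-u_0)}$ is bounded above and below by positive constants. From \eqref{asymptotics}, one obtains $u_0'(x)\sim\mp C\sqrt{S}\,e^{\mp\sqrt{S}x}$ as $x\to\pm\infty$, so the integrand $1/(u_0'(z))^2$ grows like $e^{2\sqrt{S}|z|}$ and its primitive like $\frac{1}{2\sqrt{S}}e^{2\sqrt{S}|x|}$ at either end. Multiplying by the prefactor $u_0'(x)\sim e^{-\sqrt{S}|x|}$ yields $|v_0(x)|\sim \mathrm{const}\cdot e^{\sqrt{S}|x|}$ at both ends, and hence exponential divergence on each side.

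Finally, for $0\leq\mu<\sqrt{S}$, a function $h\in C^{2,\alpha}_\mu(\R)$ must satisfy $e^{\mu\sqrt{1+x^2}}h\in C^{2,\alpha}(\R)$, and in particular must be bounded at $\pm\infty$. Writing any element of $\ker L$ as $h=au_0'+bv_0$, the decay $u_0'(x)=O(e^{-\sqrt{S}|x|})$ combined with $\mu<\sqrt{S}$ gives $u_0'\in C^{2,\alpha}_\mu(\R)$, whereas $v_0(x)$ grows like $e^{\sqrt{S}|x|}$ and cannot lie in $C^{2,\alpha}_\mu(\R)$ for any $\mu\geq 0$. This forces $b=0$ and yields $\ker L=\mathrm{span}(u_0')$. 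The only nontrivial point in the argument is the sharp asymptotic estimate of the integral defining $v_0$ showing exponential blow-up on both sides; everything else is bookkeeping.
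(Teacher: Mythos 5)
Your proposal is correct and follows essentially the same route as the paper: verify $L(u_0')=0$ by differentiating \eqref{eq-ep-zero}, obtain $v_0$ by reduction of order using the antiderivative $\frac{4S}{r}(u_0^2-u_0)$ of the first-order coefficient, show $v_0$ grows like $e^{\sqrt{S}|x|}$ from the exponential asymptotics of $u_0'$, and conclude $\ker L=\mathrm{span}\,u_0'$ for $0\leq\mu<\sqrt S$. The only cosmetic caveat is that the derivative asymptotics $u_0'(x)\sim -C\sqrt S e^{-\sqrt S x}$ should be credited to the phase-plane fact $u_0'\sim-\sqrt S u_0$ (established in the proof of Proposition \ref{prop:standing}) rather than to a formal differentiation of \eqref{asymptotics}, which is exactly how the paper justifies it.
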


\begin{proof}
	We investigate the solutions $h$ to \eqref{eq:edo-kerL}.
	This is a  second-order linear homogeneous ordinary differential equation, and we already know a solution $u_0'$ (as seen by differentiating \eqref{eq-ep-zero}).
		In this case a second solution $v_0$ can be sought in the form $v_0(x)=z(x) u_0'(x)$. Indeed plugging this ansatz into \eqref{eq:edo-kerL} yields the following first order linear ordinary differential equation for $z'$:
	$$
	z''+\left(2\frac{u_0''}{u_0'}+\frac{4S}{r}(2u_0-1)u_0'\right)z'=0,
	$$
or, equivalently, 		
$$
	z''+\left(\ln((u_0')^{2})+\frac{4S}{r}(u_0^{2}-u_0)\right)'z'=0.
$$	
As a result, we can select the solution 
$$
z'(x)=\frac{1}{(u_0')^2(x)}e^{-\frac{4S}{r}\left(u_0^2(x)-u_0(x)\right)},
$$
	which we integrate to reach $z(x)$, and thus
	\begin{equation}\label{def:v0-non-borne}
		v_0(x)=u_0'(x)\int_0^x\frac{1}{(u_0')^2(z)}e^{-\frac{4S}{r}\left(u_0^2(z)-u_0(z)\right)}\dd z.
	\end{equation}
Now, from the analysis in Section \ref{s:stationary-sol}, we know that, for some $C>0$,
\begin{equation}\label{uzeroprime-asymptotic}
u_0'(z)	\sim Ce^{-\sqrt S z}, \quad \text{ as } z\to +\infty.
\end{equation}
 Since $u_0(+\infty)=0$, the integrand in \eqref{def:v0-non-borne} is equivalent to $\frac{1}{C^2}e^{2\sqrt S z}$ as $z\to +\infty$, and thus
\begin{equation}\label{v0-asymptotic}
v_0(x)\sim \frac{1}{C2\sqrt S}e^{\sqrt S x}, \quad \text{ as } x\to +\infty.
\end{equation}
	Thus $v_0$ is unbounded and, in particular, $v_0\not\in C^{2, \alpha}_\mu (\mathbb R)$. Since solutions to \eqref{eq:edo-kerL} are the linear combinations of $u_0'\in C^{2,\alpha}_\mu(\R)$ when $0\leq \mu <\sqrt S$, $v_0\notin C^{2, \alpha}_\mu (\mathbb R)$, and since $L:C^{2, \alpha}_\mu (\mathbb R)\to C^{\alpha}_\mu (\mathbb R)$, we conclude that 
		$\ker L=\text{\rm span }u_0'$ when $0\leq \mu <\sqrt S$.
\end{proof}

\begin{lem}[The kernel of $L^*$]\label{lem:kerL*}
	If $0\leq \mu<\sqrt{S}$ then the kernel of the adjoint operator $L^*$ is 
	\begin{equation*}
		\ker L^*=\text{\rm span }\left(u_0'e^{\frac{4S}{r}(u_0^2-u_0)}\right).
	\end{equation*}
	On the other hand, if $ \mu>\sqrt{S}$ then 
	\begin{equation*}
		\ker L^*=\text{\rm span }\left(u_0'e^{\frac{4S}{r}(u_0^2-u_0)}, v_0e^{{\frac{4S}{r}(u_0^2-u_0)}}\right),
	\end{equation*}
	where $v_0(x):=u_0'(x)\displaystyle \int_0^x\frac{1}{(u_0')^2(z)}e^{-\frac{4S}{r}\left(u_0^2(z)-u_0(z)\right)}\dd z$ is as in Lemma \ref{lem:kerL}.
\end{lem}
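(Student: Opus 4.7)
The approach is to compute the formal adjoint $L^*$ explicitly, exploit a Liouville symmetrization of $L$ that converts it into Sturm--Liouville form, and thereby identify the ODE-kernel of $L^*$ as the image of $\ker L$ under multiplication by a positive weight $w$; admissibility of each candidate element in the relevant weighted space is then checked by matching its asymptotic growth against the duality threshold dictated by $\mu$.

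First, I would compute the formal adjoint. Writing $Lh=h''+b(x)h'+c(x)h$ with $b(x):=\frac{4S}{r}u_0'(2u_0-1)$ and $c(x):=S\bigl(f'(u_0)+\tfrac{4}{r}(u_0')^2\bigr)$, an integration by parts gives $L^*\phi=\phi''-(b\phi)'+c\phi$. The weight $w(x):=e^{\frac{4S}{r}(u_0^2-u_0)}$ is chosen precisely so that $w'/w=b$, which is the Liouville substitution putting $L$ in self-adjoint form. A direct one-line check then yields the key identity
\begin{equation*}
L^*(w\phi)=w\,L\phi\qquad\text{for every smooth }\phi.
\end{equation*}
Consequently the map $\phi\mapsto w\phi$ is a bijection between the ODE-kernel of $L$ and the ODE-kernel of $L^*$. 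Combining with Lemma~\ref{lem:kerL}, every smooth solution of $L^*\varphi=0$ lies in $\operatorname{span}(wu_0',\,wv_0)$.

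Next, I would select those elements that define continuous functionals on the target space $C^{0,\alpha}_\mu(\R)$; since any $f\in C^{0,\alpha}_\mu(\R)$ is dominated by a multiple of $e^{-\mu\sqrt{1+x^2}}$, the dual elements must grow no faster than $e^{\mu\sqrt{1+x^2}}$. Using that $w$ is bounded and bounded away from $0$, together with the asymptotics of $u_0'$ given in \eqref{uzeroprime-asymptotic} and the symmetry $u_0(-x)=1-u_0(x)$, the function $wu_0'$ decays like $e^{-\sqrt{S}|x|}$ at both infinities and is therefore admissible for every $\mu\in[0,\sqrt{S})$. By \eqref{v0-asymptotic}, $v_0(x)\sim\mathrm{cst}\,e^{\sqrt{S}x}$ as $x\to+\infty$; the same computation (reversing the limits of integration in the definition of $v_0$ and using the symmetry of $u_0$) gives $v_0(x)\sim\mathrm{cst}\,e^{-\sqrt{S}x}$ as $x\to-\infty$. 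Hence $wv_0$ grows like $e^{\sqrt{S}|x|}$ and represents a bounded functional on $C^{0,\alpha}_\mu(\R)$ only when $\mu>\sqrt{S}$. The two cases in the statement then follow: $\ker L^*=\operatorname{span}(wu_0')$ for $0\le\mu<\sqrt{S}$ and $\ker L^*=\operatorname{span}(wu_0',wv_0)$ for $\mu>\sqrt{S}$. These dimensions match the Fredholm alternative together with Lemmas~\ref{lem:fredprop} and \ref{lem:kerL}: for $0\le\mu<\sqrt{S}$ one has $\dim\ker L=1$ and $\operatorname{ind}L=0$; for $\mu>\sqrt{S}$, $u_0'$ leaves the weighted space so $\dim\ker L=0$ and $\operatorname{ind}L=-2$.

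The algebraic identity $L^*(w\cdot)=wL(\cdot)$ is immediate once $w$ has been guessed; the main subtlety is the rigorous duality/admissibility step, namely the verification that the classical functions $wu_0'$ and $wv_0$ actually exhaust $\ker L^*$ on the correct dual space attached to $C^{0,\alpha}_\mu(\R)$, and not merely on some auxiliary space of smooth functions. This reduces to comparing their exponential growth with the admissible rate $e^{\mu\sqrt{1+x^2}}$, and the threshold $\mu=\sqrt{S}$ --- set by the decay rate of the bounded kernel element $u_0'$ --- is exactly what selects one or two elements according to the sign of $\mu-\sqrt{S}$.
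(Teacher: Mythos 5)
Your computation of the weight $w=e^{\frac{4S}{r}(u_0^2-u_0)}$, the identity relating $L$ and its formal adjoint through $w$, the inclusion of $wu_0'$ (and, for $\mu>\sqrt S$, of $wv_0$) in $\ker L^*$, and the asymptotic/integrability analysis against the rate $e^{\mu\sqrt{1+x^2}}$ all coincide with what the paper does. The difference lies in how you argue that these elements \emph{exhaust} $\ker L^*$, and there your argument has a genuine gap. The operator $L^*$ in the statement is the Banach-space adjoint acting on $(C^{0,\alpha}_\mu(\R))^*$, and this dual is not a space of functions: it contains measures, point evaluations, and functionals that vanish identically on $C_c^\infty(\R)$ (which is \emph{not} dense in a H\"older space). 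Your step ``every smooth solution of $L^*\varphi=0$ lies in $\mathrm{span}(wu_0',wv_0)$, then keep those with admissible growth'' only classifies the functionals of the form $f\mapsto\int \varphi f$ with $\varphi$ a classical solution of the formal adjoint ODE. It does not rule out an element of $\ker L^*$ that is not represented by any function at all; such a functional restricted to test functions is the zero distribution, so elliptic regularity gives no information about it, and no growth comparison can eliminate it. Hence the admissibility threshold $\mu\lessgtr\sqrt S$ by itself proves only the inclusion $\supseteq$, not the equality claimed in the lemma.

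The correct closing step is exactly the Fredholm dimension count that you relegate to a consistency check at the end: since $L$ is Fredholm (Lemma \ref{lem:fredprop}) its range is closed, so $\dim\ker L^*=\mathrm{codim}\,\rg L=\dim\ker L-\mathrm{ind}\,L$, which equals $1-0=1$ for $0\leq\mu<\sqrt S$ and $0-(-2)=2$ for $\mu>\sqrt S$ (using Lemma \ref{lem:kerL} and the fact that $u_0'\notin C^{2,\alpha}_\mu(\R)$ when $\mu>\sqrt S$). Combined with your verified inclusions of one, respectively two, linearly independent functionals, this forces equality; this is precisely how the paper concludes. So your proof becomes correct once the sentence ``these dimensions match the Fredholm alternative'' is promoted from a remark to the actual argument for the upper bound, and the ODE-kernel classification is demoted to what it really provides, namely candidate elements for the inclusion. (A minor additional point: ``grows no faster than $e^{\mu\sqrt{1+x^2}}$'' should be stated as integrability of $\varphi e^{-\mu\sqrt{1+x^2}}$, together with the vanishing of the boundary terms in the integration by parts, which your asymptotics do guarantee in both regimes.)
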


\begin{proof}
	Our starting point is to notice that the coefficient of the first-order term in the definition of $L$, that is $u_0'(2u_0-1)$, is the derivative of $u_0^2-u_0$ so that 
	\begin{equation*}
		Lh=h''+\frac{4S}{r}(u_0^2-u_0)'h'+S\left(f'(u_0)+\frac{4}{r}(u_0')^2\right)h, 
	\end{equation*}
	from which we deduce the formulation
$$
		Lh=\left(h'e^{\frac{4S}{r}(u_0^2-u_0)}\right)'e^{-\frac{4S}{r}(u_0^2-u_0)}+ S\left(f'(u_0)+\frac{4}{r}(u_0')^2\right)h,
		$$
which is symmetric in the adequate weighted $L^2$ space:
\begin{align*}
			\int_{\mathbb R}k(Lh) e^{\frac{4S}{r}(u_0^2-u_0)}&=-\int_{\mathbb R}k'h'e^{\frac{4S}{r}(u_0^2-u_0)} +\int_{\mathbb R}S\left(f'(u_0)+\frac{4}{r}(u_0')^2\right)hke^{\frac{4S}{r}(u_0^2-u_0)}\\
			&=\int_{\mathbb R}(Lk)he^{\frac{4s}{r}(u_0^2-u_0)}.
\end{align*}
In particular, for any $k\in C^{2, \alpha}_\mu(\mathbb R)$, we have
	\begin{align*}
		\int_{\mathbb R}k(Lh)&=\int_{\mathbb R}k\left(h'e^{\frac{4S}{r}(u_0^2-u_0)}\right)'e^{-\frac{4S}{r}(u_0^2-u_0)}+ S\left(f'(u_0)+\frac{4}{r}(u_0')^2\right)hk\\
		&=\int_{\mathbb R} -\left(ke^{-\frac{4S}{r}(u_0^2-u_0)}\right)'h'e^{\frac{4S}{r}(u_0^2-u_0)} \\
		&\qquad +\int_{\mathbb R}S\left(f'(u_0)+\frac{4}{r}(u_0')^2\right)h\left(ke^{-\frac{4S}{r}(u_0^2-u_0)}\right)e^{\frac{4S}{r}(u_0^2-u_0)} \\
		&=\int_{\mathbb R}h\left(L(ke^{-\frac{4S}{r}(u_0^2-u_0)})\right)e^{\frac{4S}{r}(u_0^2-u_0)}.
	\end{align*}
	Therefore, if $ve^{-\frac{4S}{r}(u_0^2-u_0)}=k\in\ker L $, then we have 
	\begin{align*}
		\int_{\mathbb R}(L^*v) h&=\int_{\mathbb R}v (Lh) \\
		&=\int_{\mathbb R}h\left(L(ve^{-\frac{4S}{r}(u_0^2-u_0)})\right)e^{\frac{4S}{r}(u_0^2-u_0)}=0,
	\end{align*}
	provided each integral is finite. In particular, since $C^{2, \alpha}_\mu(\mathbb R)$ is dense in $C^{0, \alpha}_\mu(\mathbb R)$, this shows that
	$$
	\vspan\left(u_0'e^{\frac{4S}{r}(u_0^2-u_0)}\right)\subset \ker L^*.
	$$
	
	 Assume  $0\leq \mu< \sqrt{S}$. Then  we deduce from Lemma \ref{lem:fredprop} and Lemma \ref{lem:kerL} that $\dim\ker L^*=-\textrm{ ind } L+\dim\ker L =0+1=1$, and therefore we do have  $\ker L^*=\vspan\left(u_0'e^{\frac{4S}{r}(u_0^2-u_0)}\right)$.
	 
	  Assume $\mu>\sqrt{S}$. This time, the asymptotics for $v_0$ being given in \eqref{v0-asymptotic}, terms $\int_{\mathbb R}v_0he^{\frac{4S}{r}(u_0^2-u_0)}$ are finite as soon as $h\in C^{0, \alpha}_\mu(\mathbb R)$, and therefore
	  $$
	  \vspan\left(v_0e^{\frac{4S}{r}(u_0^2-u_0)}\right)\subset \ker L^*,
	  $$
	  by a density argument. Then  we deduce from Lemma \ref{lem:fredprop} and Lemma \ref{lem:kerL} that  $\dim \ker L^*=-\textrm{ ind } L+\dim\ker L=-(-2)+0=2$. Since $u_0'e^{\frac{4S}{r}(u_0^2-u_0)}$ and $v_0e^{\frac{4S}{r}(u_0^2-u_0)}$ are linearly independent, we do have $\ker L^*=\vspan\left(u_0'e^{\frac{4S}{r}(u_0^2-u_0)},v_0e^{\frac{4S}{r}(u_0^2-u_0)}\right)$. 
\end{proof}

\begin{lem}[Surjectivity of $\partial_{c,h}\mathcal F(0,0,0)$]\label{lem:onto}
	Let $0\leq \mu< \sqrt{S}$ be given. Then, the application 
\begin{eqnarray*}
\partial_{c, h}\mathcal F(0, 0, 0):& \R\times C_\mu ^{2,\alpha}(\R)  & \to \quad\;    C_\mu^{0,\alpha}(\R)\\
& (c,h)  &\mapsto\quad\,    Lh+cu_0'
\end{eqnarray*}
	is surjective.
\end{lem}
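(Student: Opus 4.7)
The plan is to apply the Fredholm alternative. Under the assumption $0\leq \mu < \sqrt{S}$, Lemma \ref{lem:fredprop} gives $\mathrm{ind}\, L = 0$, Lemma \ref{lem:kerL} gives $\dim \ker L = 1$, and Lemma \ref{lem:kerL*} gives $\ker L^* = \mathrm{span}\bigl(u_0' e^{\frac{4S}{r}(u_0^2-u_0)}\bigr)$. Consequently $\dim \mathrm{coker}\, L = 1$, and the range of $L$ is the closed subspace
\begin{equation*}
\mathrm{range}(L) = \left\{ g \in C^{0,\alpha}_\mu(\mathbb{R}) \;:\; \int_{\mathbb{R}} g(x) \, u_0'(x) \, e^{\frac{4S}{r}(u_0^2(x)-u_0(x))} \, \mathrm{d}x = 0 \right\}.
\end{equation*}
Note that the integrals above are finite: indeed, any $g \in C^{0,\alpha}_\mu(\mathbb{R})$ is at worst bounded, the weight $e^{\frac{4S}{r}(u_0^2-u_0)}$ is bounded on $\mathbb{R}$ (since $0<u_0<1$), and $u_0'$ decays like $e^{-\sqrt{S}|x|}$ at $\pm\infty$ by Proposition \ref{prop:standing}.

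The key observation is that $u_0'$ itself does not lie in $\mathrm{range}(L)$. Indeed, pairing it with the generator of $\ker L^*$ yields
\begin{equation*}
I := \int_{\mathbb{R}} (u_0'(x))^2 \, e^{\frac{4S}{r}(u_0^2(x)-u_0(x))} \, \mathrm{d}x > 0,
\end{equation*}
since the integrand is non-negative, continuous, and not identically zero. Thus adding the rank-one map $c \mapsto c u_0'$ to $L$ produces a surjective operator.

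More precisely, given any $g \in C^{0,\alpha}_\mu(\mathbb{R})$, define
\begin{equation*}
c := \frac{1}{I} \int_{\mathbb{R}} g(x) \, u_0'(x) \, e^{\frac{4S}{r}(u_0^2(x)-u_0(x))} \, \mathrm{d}x \in \mathbb{R}.
\end{equation*}
Then $g - c u_0' \in C^{0,\alpha}_\mu(\mathbb{R})$ and by construction
\begin{equation*}
\int_{\mathbb{R}} (g - c u_0')(x) \, u_0'(x) \, e^{\frac{4S}{r}(u_0^2(x)-u_0(x))} \, \mathrm{d}x = 0,
\end{equation*}
so $g - c u_0' \in \mathrm{range}(L)$. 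Hence there exists $h \in C^{2,\alpha}_\mu(\mathbb{R})$ with $Lh = g - c u_0'$, i.e.\ $Lh + c u_0' = g$, proving surjectivity of $\partial_{c,h}\mathcal{F}(0,0,0)$.

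This argument is essentially bookkeeping; the substantive work has already been done in Lemmas \ref{lem:fredprop}, \ref{lem:kerL}, and \ref{lem:kerL*}. The only point requiring care is verifying the finiteness of the orthogonality integrals in the weighted Hölder space (handled by the exponential decay of $u_0'$) and the strict positivity of $I$, which together ensure the choice of $c$ is well-defined and the Fredholm alternative can be applied.
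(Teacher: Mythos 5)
Your proposal is correct and follows essentially the same route as the paper: both rely on $\operatorname{ind}L=0$, $\ker L=\operatorname{span}u_0'$ and $\ker L^*=\operatorname{span}\bigl(u_0'e^{\frac{4S}{r}(u_0^2-u_0)}\bigr)$, together with the strict positivity of $\int_\R (u_0')^2e^{\frac{4S}{r}(u_0^2-u_0)}$, to conclude that $u_0'\notin\operatorname{rg}L$ and hence $C^{0,\alpha}_\mu(\R)=\operatorname{rg}L\oplus\operatorname{span}u_0'$. Your explicit choice of $c$ merely makes this direct-sum decomposition constructive, so no further comment is needed.
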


\begin{proof}
	We check that $u_0'$ is not in the range of $L$. Since $L$ has closed range we have $\rg{L}=(\ker L^*)^\perp$, and thus $\rg{L}=\left(\vspan\left(u_0'e^{\frac{4S}{4}(u_0^2-u_0)}\right)\right)^\perp$ from Lemma \ref{lem:kerL*}. But
	\begin{equation*}
		\left\langle u_0'e^{\frac{4S}{4}(u_0^2-u_0)}, u_0'\right\rangle_{(C^{0, \alpha}_\mu(\R))^*, C^{0, \alpha}_\mu(\R)}=\int_\R (u_0')^2e^{\frac{4S}{4}(u_0^2-u_0)}>0
	\end{equation*}
so that $u_0'\not\in \rg{L}$. Since $\rg L$ has codimension 1 by Lemma \ref{lem:fredprop} and \ref{lem:kerL}, we have $C^{0, \alpha}_\mu(\mathbb R)=\rg{L}\oplus \vspan u_0'$. This shows that $\partial_{c, h}\mathcal F(0, 0, 0) $ is surjective.
\end{proof}

\begin{remark} We present here an alternate  way to prove that $u_0'\notin \rg L$ remains true when $\mu\geq \sqrt{S}$. To do so, let us solve the second-order linear ordinary differential equation 
	\begin{equation}\label{eq:edo-second-membre}
		w''+\frac{4S}{r}u_0'(2u_0-1)w' + S\left(f'(u_0)+\frac{4}{r}(u_0')^2\right)w=u_0'.
	\end{equation}
	Recall that the solutions of the associated homogeneous equation are spanned by $u_0'$ and $v_0$ provided by Lemma \ref{lem:kerL}. To find a particular solution to \eqref{eq:edo-second-membre}, we use the method of variation of constants. We see that
	$\varphi(x):=\lambda _1 (x)u_0'(x)+\lambda _2(x)v_0(x)$ solves \eqref{eq:edo-second-membre} as soon as
	$$
	\begin{cases}
		u_0' \lambda _1'+v_0\lambda _2'&=0\\
		u_0'' \lambda _1'+v_0'\lambda _2'&=u_0',
	\end{cases}
	$$
	which yields
	$$
	\lambda_2'\frac{u_0'v_0'-u_0''v_0}{u_0'}=u_0', \quad \lambda _1'=-\frac{v_0}{u_0'}\lambda _2'.
	$$
	Since $u_0'v_0'-u_0''v_0$ is nothing else than the Wronskian, it is equal to $\theta^{-1} e^{-\frac{4S}{r}(u_0^{2}-u_0)}$ for some $\theta\neq 0$,  and thus
	$$
	\begin{cases}
		\lambda_2'(x)=\theta (u_0')^{2}(x)e^{\frac{4S}{r}(u_0^{2}(x)-u_0(x))}\sim \theta C^{2}e^{-2\sqrt S x}\vspace{5pt}\\
		\lambda_1'(x)=-\theta v_0(x)u_0'(x)e^{\frac{4S}{r}(u_0^{2}(x)-u_0(x))}\sim -\frac{\theta}{2\sqrt S},
	\end{cases}
	$$
	where the equivalents are taken as $x\to +\infty$ and where we have used \eqref{uzeroprime-asymptotic} and \eqref{v0-asymptotic}. Hence, we can select
	$$
	\begin{cases}
		\lambda_2(x)=-\int_x^{+\infty}\theta (u_0')^{2}(z)e^{\frac{4S}{r}(u_0^{2}(z)-u_0(z))}\dd z\sim \frac{\theta C^{2}}{2\sqrt S}e^{-2\sqrt S x}\vspace{5pt}\\
		\lambda_1(x)=-\int _0 ^{x}\theta v_0(z)u_0'(z)e^{\frac{4S}{r}(u_0^{2}(z)-u_0(z))}\dd z \sim -\frac{\theta}{2\sqrt S}x.
	\end{cases}
	$$
	Hence the solutions to \eqref{eq:edo-second-membre} are 
	$$
	w(x)=(C_1+\lambda _1(x))u_0'(x)+(C_2+\lambda_2(x))v_0(x)
	$$
	for any $C_1\in \R$, $C_2\in \R$. If $C_2\neq 0$ then, from all the above asymptotic, $w$ is unbounded. If $C_2=0$ then, from all the above asymptotics,
	$$
	w(x)\sim -\frac{\theta C}{2\sqrt S}xe^{-\sqrt{S}x}, \quad \text{ as } x\to +\infty.
	$$
This above asymptotics shows that $w\notin C^{2,\alpha}_\mu(\R)$ when $\mu \geq \sqrt S$, and  thus	$u_0'\notin \rg L$.	
\end{remark}

\subsection{Construction of traveling waves}\label{ss:proof}

We are now in the position to complete the proof of Theorem \ref{thm:existence-eps}, that is the construction of traveling waves for \eqref{eq-ep} when $0<\ep\ll 1$.

\begin{proof}[Proof of Theorem \ref{thm:existence-eps}] Assume $0\leq \mu<\sqrt S$. Let us recall that $\mathcal F:\mathbb R \times \mathbb R\times C^{2, \alpha}_\mu(\R)\to C^{0,\alpha}_\mu(\R)$ is  given by \eqref{eq:F}. It is Fréchet differentiable (even of the class $C^1$) with respect to each of its variables, and we have
	\begin{eqnarray*}
		\partial_\varepsilon\mathcal F (0, 0, 0) &= & g(u_0)+\frac{2}{r}(u_0')^2, \\
		\partial_c\mathcal F(0, 0, 0) &= & u_0' \\
		L=\partial_h\mathcal F(0, 0, 0) &: & h\mapsto Lh=h''+\frac{4S}{r}u_0'(2u_0-1)h' + S\left(f'(u_0)+\frac{4}{r}(u_0')^2\right)h.
	\end{eqnarray*}
	We have shown, in Lemma \ref{lem:fredprop}, that $L$ is a Fredholm operator with indice $0$ and, in Lemma \ref{lem:kerL}, that  the kernel of $L$ is $\vspan u_0'$ in the considered weighted Hölder space.

	Our concern is the derivative $\partial_{c, h}\mathcal F(0, 0,0): (c, h)\mapsto Lh+cu_0'$. It has been shown in Lemma \ref{lem:onto} that it is surjective. It is not difficult to show that 
	\begin{equation*}
		\ker \partial_{c, h}\mathcal F(0, 0,0) =\{0\}\times\vspan u_0', 
	\end{equation*}
	and that the restriction of $\partial_{c, h}\mathcal F(0, 0,0) $ to $\R\times N$, where 
	\begin{equation*}
		N:=\left\{f\in C^{2, \alpha}_\mu(\mathbb R): \int_\R f u_0'= 0\right\}%=(\ker L)^{\perp},
	\end{equation*}
is a topological complement of $\ker L$, is injective and still surjective. Therefore we can apply the Implicit Function Theorem \ref{thm:implicit-functions} to the restriction of $\mathcal F$ to $\mathbb R\times\mathbb R\times N$. We deduce the existence of a branch $(c_\ep, h_\ep)$, $0\leq \ep\ll 1$, of solutions with $\ep\mapsto (c_\ep,h_\ep)$ continuous and $h_\ep$ satisfying \eqref{perturbation}.

It remains to prove \eqref{speed}. Since $\mathcal F$ is $C^1$ in all its variables we deduce from $\mathcal F (\ep,c_\ep,h_\ep)=0$ and  the chain rule that
$$
\partial_\varepsilon \mathcal F(\varepsilon, c_\ep, h_\ep)+\frac{dc_\ep}{d\ep} \partial_c\mathcal F(\varepsilon, c_\ep, h_\ep) + \partial_h \mathcal F(\varepsilon, c_\ep, h_\ep)\left(\frac{d h_\ep}{d\ep}\right) = 0,
$$
which we evaluate at $\ep=0$ to get
$$
g(u_0)+\frac{2}{r}(u_0')^{2}+\left.\frac{dc_\ep}{d\ep}\right \vert _{\ep=0} u_0'+L\left(\left.\frac{d h_\ep}{d\ep}\right\vert_{\ep=0}\right)=0.
$$
Since $\rg L=(\ker L^*)^{\perp}=\left(\vspan \left(u_0'e^{\frac{4S}{r}(u_0^2-u_0)}\right)\right)^{\perp}$, multiplying the above by $u_0'e^{\frac{4S}{r}(u_0^2-u_0)}$ and integrating over $\R$, we reach
$$
\left.\frac{dc_\ep}{d\ep}\right \vert _{\ep=0}=\frac{\displaystyle -\int_\R \left(g(u_0)+\frac 2r (u_0')^{2}\right)u_0'e^{\frac{4S}{r}(u_0^2-u_0)}} { \displaystyle\int_\R (u_0')^{2}e^{\frac{4S}{r}(u_0^2-u_0)}}>0,
$$
which yields \eqref{speed} and concludes the proof of Theorem \ref{thm:existence-eps}.
\end{proof}

\section{The speed of the traveling stacked clines}\label{s:speed}

In this section, we obtain an explicit form for $c_1=c_1(r,S)$ appearing in the asymptotic formula for the speed $c_\ep=c_1\ep+o(\ep)$ as given in  \eqref{speed}. This in turn provides valuable insights on the model for coupled underdominant clines.
\medskip

For convenience let us temporarily denote $u=u_0$ the standing wave solution constructed in Proposition \ref{prop:standing}. From \eqref{eq-ep-zero}, we see that $v:={u'}^2$ solves the linear first order ODE
$$
v'+\frac{4S}{r}(u^2-u)'v=-2Su'(2u-1)u(1-u),
$$
which is solved as
$$
{u'}^2(x)=e^{-\frac{4S}{r}(u^2-u)(x)}\left(C-2S\int _0 ^x (u^{2}-u)'(t) e^{\frac{4S}{r}(u^2-u)(t)}(u-u^{2})(t)dt\right),
$$
for some constant $C$. We integrate by parts and, up to changing the value of the constant $C$, reach
\begin{eqnarray*}
{u'}^2(x)&=&e^{-\frac{4S}{r}(u^2-u)(x)}\left(C-\frac r 2\left(e^{\frac{4S}{r}(u^{2}-u)(x)}(u-u^{2})(x)+\int_0^{x}e^{\frac{4S}{r}(u^{2}-u)(t)}(u^{2}-u)'(t)dt\right)\right)\\
&=& e^{-\frac{4S}{r}(u^2-u)(x)}\left(C-\frac r 2\left(e^{\frac{4S}{r}(u^{2}-u)(x)}(u-u^{2})(x)+\frac{r}{4S}e^{\frac{4S}{r}(u^{2}-u)(x)}\right)\right).
\end{eqnarray*}
Letting for instance $x\to +\infty$ enforces $C=\frac{r^{2}}{8S}$ and, returning to the notation $u_0$, we finally obtain
\begin{equation}\label{u-prime-u}
{u_0'}^{2}(x)=\frac{r^{2}}{8S}e^{\frac{4S}{r}(u_0-u_0^{2})(x)}-\frac{r}{2}(u_0-u_0^{2})(x)-\frac{r^{2}}{8S}.
\end{equation}
The fact that $u_0'$ can be expressed in terms of $u_0$, already observed in \cite{Barton83}, enables to obtain an explicit form $c_1=c_1(r,S)$ appearing in $c_\ep=c_1\ep+o(\ep)$ as given in  \eqref{speed}. Indeed, using \eqref{u-prime-u} and recalling that $u_0'<0$, we obtain
$$
c_1=\frac{\displaystyle -\int_\R \frac{r}{4S}u_0'(x)\left(1-e^{-\frac{4S}{r}(u_0-u_0^2)(x)}\right)dx} { -\displaystyle\int_\R u_0'(x)\left(\frac{r^{2}}{8S}e^{\frac{4S}{r}(u_0-u_0^{2})(x)}-\frac{r}{2}(u_0-u_0^{2})(x)-\frac{r^{2}}{8S}\right)^{\frac 12}e^{-\frac{4S}{r}(u_0-u_0^2)(x)}dx}.
$$
	Performing the change of variable $u=u_0(x)$ this is recast
	$$
c_1=\frac{\displaystyle \int_0^{1} \frac{r}{4S}\left(1-e^{-\frac{4S}{r}(u-u^2)}\right)du} { \displaystyle\int_0^{1} \left(\frac{r^{2}}{8S}e^{\frac{4S}{r}(u-u^{2})}-\frac{r}{2}(u-u^{2})-\frac{r^{2}}{8S}\right)^{\frac 12}e^{-\frac{4S}{r}(u-u^2)}du}. 
$$
Expanding with respect to $\frac{S}{r} \ll 1$,  we reach, after a straightforward computation,
\begin{equation}\label{Taylor-c1}
 c_1= \frac{1}{\sqrt S}\left(1+\frac{4}{15}\frac{S}{r}+\frac{2}{45}\frac{S^2}{r^2}+\cdots\right).
\end{equation}
In the sequel we denote
\begin{equation}\label{Taylor-c1-etoile}
 c_1^{*}:= \frac{1}{\sqrt S}\left(1+\frac{4}{15}\frac{S}{r}\right),
\end{equation}
the first order term of expansion \eqref{Taylor-c1}.

To verify the accuracy of our previsions, we ran simulations of the full system \eqref{syst-p-q-D},
the one established before simplification thanks to the quasi-linkage equilibrium approximation. We numerically estimate the instantaneous speed by following the movement of the center of the fronts. 
The comparison with the theoretical speed $\ep c_1^{*}=sc_1^{*}$ (let us recall that $s$ appearing in the original model is nothing else than $\varepsilon$, see \eqref{sAsBep}) is shown in Figure \ref{fig:speed-theory-numerics}.
We observe that formula \eqref{Taylor-c1-etoile} gives a very good approximation of the instantaneous speed when $\frac{S}{r}$ is not too small (right of the figure).
This validates {\it a posteriori} the quasi-linkage equilibrium approximation.

\begin{figure}[H]
\begin{center}
  \includegraphics[width=7cm]{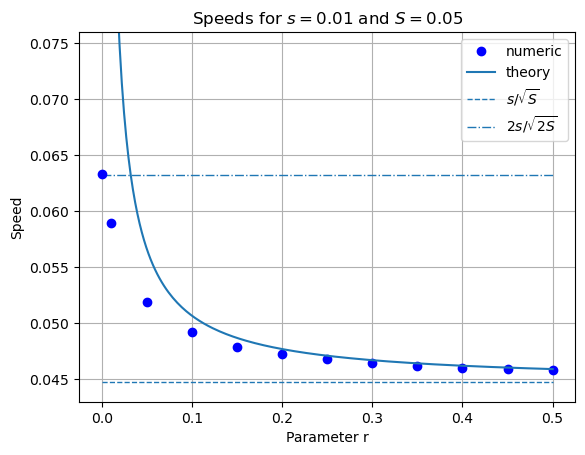}
\caption{
Comparison of the theoretical speed $sc_1^{*} $  and of the numerically estimated speed of the stacked fronts.
}
\label{fig:speed-theory-numerics}
\end{center}
\end{figure}

When $r=0.5$ (free recombination), the linkage disequilibrium stays small and the coupled clines move at a speed which is close to the one each cline would have if travelling alone, that is $s/\sqrt{S}$ (or $s\sigma/\sqrt{2S}$ in the original spatial scale, as obtained by Barton \cite{Barton79} in a single-locus model). Indeed, without  interaction, we are left with $u_t=u_{xx}+Sf(u)+sg(u)$, which is nothing else than the bistable equation ($0<\frac s S<1$)
$$
u_t=u_{xx}+Su(1-u)\left(2u-1+\frac sS\right),
$$
whose traveling wave, explicitly computed as $\frac 12-\frac 12 \tanh \left(\frac{\sqrt S}{2}(x-\frac{s}{\sqrt S}t)\right)$, has speed $s/\sqrt S$. 

At the other extreme, when $r=0$ (no recombination) the system becomes equivalent to a single locus where one allele has a fitness advantage $2s$ and with a cost for heterozygotes $2S$, leading to a bistable wave speed of $2s/\sqrt{2S}$.

When $r\in (0,0.5)$,  the speed of the coupled clines decreases monotonously as recombination $r$ increases. 

Our concluding remark is as follows:  whatever the values of the parameters, interacting and eventually stacked clines travel faster than one cline alone.

\section{Conclusion and perspectives}\label{s:conclusion}

In this paper we have investigated the solutions of equation \eqref{eq-u}, describing the dynamics of two coupled, asymmetric genetic incompatibilities (underdominant loci) with identical fitness effects, in a quasi-linkage equilibrium regime.
The two main results are as follows:
first, we have shown that when $\ep=0$, there is a unique standing wave $u_0$ under a normalization condition;
then, in Section \ref{s:front}, we have shown that when $\ep>0$ is small enough, there exists a traveling wave $u_{\ep}$ defined as a perturbation of $u_0$, and we obtained a simple approximation for its speed.

Those results were obtained under a series of assumptions that we recall here for discussion:
\begin{align} \label{all_assumptions}
s_A, s_B &< S \tag{H1} \\
s_A, s_B, S &\ll r \tag{H2} \\
S_A=S_B,\hspace{4pt}&\hspace{4pt} s_A = s_B \tag{H3} \\
p &= q.\tag{H4}
\end{align}

Assumption (H1) is the frame of this work which was devoted to the heterozygote inferior case. It is therefore not a hypothesis we want to discuss \emph{per se}.

Assumption (H2) expresses that we are in the case of small selective advantages.  When it does not hold, $D$ may not be small, in which case the quasi-linkage equilibrium approximation (that allowed us to reduce the number of variables) is no longer valid.
It can easily be seen that $-\frac{1}{4} \leq D \leq \frac{1}{4}$ always holds,
and that,  as shown by the $D$ equation in \eqref{syst-p-q-D}, positive $D$ is generated whenever $p$ and $q$ travel in the same direction (that is $p_xq_x>0$), while negative $D$ is generated otherwise.
These facts help to understand the kind of contribution $D$ makes to the coupling between $p$ and $q$ in \eqref{syst-p-q-D}.

Assumption (H3) is basically a hypothesis of exchangeability between loci. Although this allowed us to simplify the algebra, different loci should have different
fitness effects, and it would thus be of interest to relax this hypothesis.

Last but not least, assumption (H4) conveys the strong argument that the $A$ cline and the $B$ cline were stacked in the past and will remain stacked forever in the future.
This is indeed a good starting point from a mathematical perspective. Nevertheless, in the context of population genetics, more interesting questions arise when (H4) does not hold.
In such a situation, the coupling in \eqref{syst-p-q-D} can give rise to non-standard behaviours, such as {\it adaptation of the speed}.
The questions that arise are such as: can a traveling front be pinned by a standing front?
Will a front traveling at a large speed crossing a slower traveling front adapt its speed so as to remain stacked with the slower one?
A preliminary numerical exploration has shown that there can be a vast zoology of situations. We hope to present them in a future work.

\appendix

\section{Some useful results and tools}

We recall the {\it Implicit Function Theorem}, see \cite[Theorem 4.B]{Zei-86} for instance.

\begin{thm}[Implicit Function Theorem]\label{thm:implicit-functions}
	Let $X$, $Y$ and $Z$ be three Banach spaces. Suppose that:
	\begin{enumerate}[label={\rm (\roman*)}]
		\item The mapping $\mathcal F:U\subset X\times Y\to Z$ is defined on an open neighbourhood $U$ of $(x_0, y_0)\in X\times Y$ and $\mathcal F(x_0, y_0)=0$. 
		\item The partial Fr\'echet derivative of $\mathcal F$ with respect to $y$ exists on $U$ and
			$$
			\mathcal F_y(x_0, y_0):Y\to Z \text{  is bijective}.
			$$
		\item $\mathcal F$ and $\mathcal F_y$ are continuous at $(x_0,y_0)$. 
	\end{enumerate}
	Then, the following properties hold:
	\begin{enumerate}[label={\rm (\alph*)}]
		\item {\rm Existence and uniqueness}. There exist $r_0>0 $ and $r>0$ such that,  for every $x\in X$ satisfying $\Vert x-x_0\Vert\leq r_0$, there exists a unique $y(x)\in Y$ such that $\Vert y-y_0\Vert\leq r$ and $\mathcal F(x, y(x))=0$.
		\item {\rm Continuity}. If $\mathcal F$ is continuous in a neighbourhood of $(x_0,y_0)$, then the mapping  $x\mapsto y(x)$ is continuous in a neighbourhood of $x_0$. 
		\item {\rm Higher regularity}. If $\mathcal F$ is of the class $C^m$, $1\leq m\leq \infty$,  on a neighbourhood of $(x_0, y_0)$, then $x\mapsto y(x)$ is also of the class $C^m$ in a neighbourhood of $x_0$.
	\end{enumerate}
\end{thm}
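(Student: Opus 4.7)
The plan is to apply the Implicit Function Theorem \ref{thm:implicit-functions} to $\mathcal F$ at the base point $(\varepsilon, c, h) = (0, 0, 0)$, viewed as a map from $\R \times \bigl(\R \times C^{2,\alpha}_\mu(\R)\bigr)$ into $C^{0,\alpha}_\mu(\R)$. Since $u_0$ solves \eqref{eq-ep-zero} we have $\mathcal F(0,0,0) = 0$, and a direct computation gives
\[
\partial_\varepsilon \mathcal F(0,0,0) = g(u_0) + \tfrac{2}{r}(u_0')^2, \qquad \partial_c \mathcal F(0,0,0) = u_0', \qquad \partial_h \mathcal F(0,0,0) = L,
\]
where $L$ is the linearization defined in \eqref{eq:defL}. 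The polynomial form of the nonlinearities in $h$, $h'$ together with the smoothness of $f$ and $g$ imply that $\mathcal F$ is of class $C^1$ on a neighborhood of $(0,0,0)$. The weight exponent $0 \le \mu < \sqrt S$ is chosen precisely so that $u_0'$, which decays like $e^{-\sqrt S |x|}$ by the asymptotics \eqref{asymptotics}, lies in $C^{2,\alpha}_\mu(\R)$, and so that $L$ is Fredholm of index zero with a one-dimensional kernel (the opposite regime $\mu > \sqrt S$ yields index $-2$ and is not directly usable).

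The main obstacle is that $\partial_h \mathcal F(0,0,0) = L$ is \emph{not} bijective: by Lemma \ref{lem:kerL} one has $\ker L = \vspan u_0'$, so that hypothesis $(ii)$ of Theorem \ref{thm:implicit-functions} fails if one tries to solve in $h$ alone. The scalar unknown $c$ is exactly what restores bijectivity. Concretely I would invoke Lemma \ref{lem:onto}, which exploits the fact (from Lemma \ref{lem:kerL*}) that $\ker L^{*} = \vspan\bigl(u_0' e^{\frac{4S}{r}(u_0^2 - u_0)}\bigr)$, to obtain
\[
\bigl\langle u_0' e^{\frac{4S}{r}(u_0^2 - u_0)},\, u_0' \bigr\rangle = \int_\R (u_0')^{2} e^{\frac{4S}{r}(u_0^2 - u_0)}\,\dd x > 0,
\]
which places $u_0'$ outside $\rg L$. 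Combined with the index zero provided by Lemma \ref{lem:fredprop}, this yields the decomposition $C^{0,\alpha}_\mu(\R) = \rg L \oplus \vspan u_0'$ and hence the surjectivity of $(c,h) \mapsto Lh + c u_0'$. Introducing the topological complement
\[
N := \left\{ f \in C^{2,\alpha}_\mu(\R) : \int_\R f\, u_0'\,\dd x = 0 \right\}
\]
of $\vspan u_0' = \ker L$, the kernel of $\partial_{c,h}\mathcal F(0,0,0)$ equals $\{0\} \times \vspan u_0'$, and the restriction of this derivative to $\R \times N$ is a bounded linear \emph{bijection} onto $C^{0,\alpha}_\mu(\R)$. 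Applying Theorem \ref{thm:implicit-functions} to the restriction of $\mathcal F$ to $\R \times \R \times N$ then yields a continuous branch $\varepsilon \mapsto (c_\varepsilon, h_\varepsilon)$ with $\mathcal F(\varepsilon, c_\varepsilon, h_\varepsilon) = 0$ and $h_\varepsilon \in N$, so the normalization \eqref{perturbation} holds by construction.

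For the asymptotic formula \eqref{speed}, I would differentiate the identity $\mathcal F(\varepsilon, c_\varepsilon, h_\varepsilon) \equiv 0$ at $\varepsilon = 0$ using the chain rule and the $C^1$ smoothness of the branch, which produces
\[
g(u_0) + \tfrac{2}{r}(u_0')^{2} + \left.\frac{dc_\varepsilon}{d\varepsilon}\right|_{\varepsilon = 0} u_0' + L\!\left(\left.\frac{d h_\varepsilon}{d\varepsilon}\right|_{\varepsilon = 0}\right) = 0.
\]
To eliminate the $L(\cdot)$ term I would use the Fredholm alternative $\rg L = (\ker L^{*})^{\perp}$ and pair this identity against the generator $u_0' e^{\frac{4S}{r}(u_0^2 - u_0)}$ of $\ker L^{*}$. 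Solving the resulting scalar equation for $\frac{dc_\varepsilon}{d\varepsilon}|_{\varepsilon = 0}$ yields exactly the coefficient appearing in \eqref{speed}, and together with $c_0 = 0$ this gives $c_\varepsilon = c_1 \varepsilon + o(\varepsilon)$. All the integrals involved are finite thanks to the exponential decay $u_0'(x) \sim -C\sqrt S\, e^{-\sqrt S |x|}$ inherited from the phase-plane analysis of Section \ref{s:stationary-sol}, so no further compactness arguments are needed.
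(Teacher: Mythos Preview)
Your proposal does not address the stated theorem. Theorem \ref{thm:implicit-functions} is the classical Implicit Function Theorem in Banach spaces, and the paper does not prove it at all; it is quoted from \cite[Theorem 4.B]{Zei-86} as a black-box tool. What you have written is instead (a correct sketch of) the proof of Theorem \ref{thm:existence-eps}, which \emph{applies} the Implicit Function Theorem to the operator $\mathcal F$ of \eqref{eq:F}. Everything you say about the Fredholm index, the kernel of $L$ and $L^*$, the complement $N$, and the derivation of \eqref{speed} matches the paper's argument in subsection \ref{ss:proof} essentially line for line, but none of it is a proof of the abstract statement about Banach spaces $X$, $Y$, $Z$ and a general map $\mathcal F$.

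If the intent was genuinely to prove Theorem \ref{thm:implicit-functions}, you would need to produce the standard Banach fixed-point argument: use the bijectivity of $\mathcal F_y(x_0,y_0)$ and its bounded inverse (via the open mapping theorem) to rewrite $\mathcal F(x,y)=0$ as a contraction in $y$ on a small ball, then iterate to obtain $y(x)$, and finally bootstrap regularity. Your text contains none of this; it simply invokes the theorem rather than establishing it.
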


In Section \ref{s:front} we apply Theorem \ref{thm:implicit-functions} to the operator $\mathcal F$ defined in \eqref{eq:F}, with $X=\R$, $x=\ep$, $x_0=0$, $Y=\R\times C^{2,\alpha}_\mu(\R)$, $y=(c,h)$, $y_0=(0,0)$, and $Z=C^{0,\alpha}_\mu(\R)$.

\bigskip

Next, we quote some results on Fredholm operators. Let us recall that the operator $L$ has the Fredholm property with index 0 if $\ker L$ has a finite dimension, $\rg L$ is closed and has finite codimension and
$$
\textrm{ ind } L:=\dim\ker L-\codim\rg L=0.
$$
In particular, since its range is closed, such an operator is {\it normally solvable}:
\begin{equation*}
	\exists u\neq 0, Lu=f \quad \Leftrightarrow\quad \forall \phi\in (\rg L)^\perp, \phi(f)=0,
\end{equation*}
and remark that $(\rg L)^\perp = \ker L^*$. 

We  recall below a theorem from Volpert, Volpert and Collet \cite[Theorem 2.1 and Remark p787]{Vol-Vol-Col-99}.

\begin{thm}[Fredholm property on the line]\label{thm:Fredholm-line} 
	For $0<\alpha<1$, consider the operator  $L:C^{2,\alpha}(\mathbb R)\to C^\alpha (\mathbb R)$  defined by
	\begin{equation*}
		Lu:=a(x)u''+b(x)u'+c(x)u,
	\end{equation*}
where the coefficients $a(x)$, $b(x)$, $c(x)$ are smooth, and $a(x)\geq a_0$ for some $a_0>0$. Assume further that the coefficients $a(x)$, $b(x)$, and $c(x)$ have finite limits as $x\to\pm\infty$ and denote
	\begin{equation*}
		a^\pm:=\lim_{x\to\pm\infty}a(x), \qquad b^\pm:=\lim_{x\to\pm\infty}b(x), \qquad c^\pm:=\lim_{x\to\pm\infty}c(x).
	\end{equation*}
	Finally, let us define the limiting operators
	\begin{equation*}
		L^\pm u:=a^\pm u''+b^\pm u'+c^\pm u,
	\end{equation*}
	and assume that for any $\lambda\geq 0$, the equation
	$$
	L^\pm u-\lambda u=0
	$$
	has no nontrivial solution in $ C^{2,\alpha}(\mathbb R)$.

	Then $L$ is Fredholm with index 0.
\end{thm}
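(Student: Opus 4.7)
The plan is to follow the classical strategy for Fredholm operators on the line with asymptotically constant coefficients, reducing the problem to the constant-coefficient limiting operators and then computing the index via a dimension count. First, I would recast the scalar equation as a planar first-order system: setting $\mathbf{U}=(u,u')^T$ turns $Lu=f$ into $\mathbf{U}'=A(x)\mathbf{U}+(0,f/a(x))^T$, where $A(x)\to A^\pm$ as $x\to\pm\infty$. The eigenvalues of $A^\pm$ are exactly the roots of $a^\pm\mu^2+b^\pm\mu+c^\pm=0$. A nontrivial bounded solution of $L^\pm u-\lambda u=0$ exists if and only if the characteristic polynomial $a^\pm\mu^2+b^\pm\mu+c^\pm-\lambda$ has a purely imaginary root, so the assumption rules out such roots for every $\lambda\geq 0$. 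Since, for $\lambda\to +\infty$, the roots behave like $\pm\sqrt{\lambda/a^\pm}$ (one positive, one negative real part), and since no root can cross the imaginary axis as $\lambda$ decreases continuously to $0$, each $A^\pm$ must have exactly one eigenvalue with positive real part and one with negative real part at $\lambda=0$. Both limiting equilibria are saddles of the same type.

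Second, I would establish exponential dichotomies for $\mathbf{U}'=A(x)\mathbf{U}$ on $(-\infty,-R]$ and $[R,+\infty)$ for $R$ large, via the standard roughness theorem: because $A(x)$ converges to the hyperbolic matrix $A^\pm$, the non-autonomous system inherits a dichotomy on a sufficiently far half-line. This provides one-dimensional stable subspaces $E^s(\pm\infty)$ and one-dimensional unstable subspaces $E^u(\pm\infty)$ along which the propagator contracts forward/backward at an exponential rate.

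Third, I would build a parametrix for $L$. Given $f\in C^{\alpha}(\mathbb{R})$, decompose $f=\chi_-f+\chi_0f+\chi_+f$ via a smooth partition of unity adapted to $(-\infty,-R]$, $[-R-1,R+1]$, $[R,+\infty)$. On the unbounded pieces, inversion is carried out using the Green kernel built from the exponential dichotomies; on the compact middle piece, the standard Schauder theory for regular elliptic operators on a bounded interval applies. Patching these yields a bounded right parametrix $K:C^{\alpha}(\mathbb{R})\to C^{2,\alpha}(\mathbb{R})$ for which $LK-I$ has a kernel supported in a compact $x$-strip and is therefore compact by Arzelà–Ascoli. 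A symmetric construction produces a left parametrix. This proves that $L$ is Fredholm.

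Fourth, for the index I would appeal to the identification
\begin{equation*}
\mathrm{ind}\, L=\dim E^u(-\infty)-\dim E^u(+\infty),
\end{equation*}
valid for first-order ODE systems with hyperbolic asymptotic matrices; here both dimensions equal $1$, so $\mathrm{ind}\, L=0$. An alternative is a homotopy argument: deform the coefficients continuously to constants while preserving the hypothesis at every $\lambda\geq 0$, and verify directly via Fourier analysis that the constant-coefficient operator has index $0$. The main difficulty lies in the third step, namely assembling the parametrix so that the error is genuinely compact rather than merely bounded; and, if one prefers the homotopy route for the index, in ensuring that the non-bounded-solution hypothesis is preserved along the deformation. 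The dimension-count identity is the safer route and avoids the latter issue.
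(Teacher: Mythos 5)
Your proposal is correct in outline, but be aware that the paper does not prove this statement at all: it is recalled verbatim from Volpert--Volpert--Collet \cite{Vol-Vol-Col-99}, so there is no in-paper argument to match, and your proof is a genuinely different (and legitimate) route. The cited approach is functional-analytic: a priori estimates together with the limiting operators show that $L-\lambda$ is normally solvable with finite-dimensional kernel for every $\lambda\geq 0$ (this is exactly where the hypothesis for \emph{all} $\lambda\geq 0$ enters), and the index is then obtained by homotopy along the ray $\lambda\in[0,\infty)$, since $L-\lambda$ stays Fredholm there and is invertible for $\lambda$ large, giving $\mathrm{ind}\,L=\mathrm{ind}(L-\lambda)=0$ without any dimension count. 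Your route --- first-order reduction, hyperbolicity of $A^\pm$ via the root-counting homotopy in $\lambda$, exponential dichotomies on half-lines, a parametrix with compactly supported error, and the Palmer-type formula $\mathrm{ind}\,L=\dim E^u(-\infty)-\dim E^u(+\infty)$ --- is standard in the dynamical-systems literature and uses the hypothesis in the same essential way (no characteristic root can cross the imaginary axis as $\lambda$ runs over $[0,\infty)$, so each limit matrix is a saddle with one-dimensional unstable subspace). What the cited approach buys is independence from dichotomy machinery and direct generalization to elliptic systems; what yours buys is an explicit description of kernel and cokernel through stable/unstable subspaces, which makes the index transparent. Two points you should not gloss over, though they are repairs rather than gaps: the transfer of the Fredholm property and index between the scalar operator $C^{2,\alpha}(\mathbb R)\to C^{0,\alpha}(\mathbb R)$ and its first-order reformulation (and from the $C^0_b$ or $L^2$ setting in which Palmer-type results are usually stated to H\"older spaces) requires Schauder estimates; and compactness of the parametrix error in $C^{0,\alpha}$ does not follow from bare Arzel\`a--Ascoli but from the compact embedding $C^{1,\alpha}(K)\hookrightarrow C^{0,\alpha}(K)$ applied to the commutator terms $[L,\chi]$, together with an invertible local solver on the middle interval (e.g.\ a Dirichlet problem on a slightly larger interval, or $L-M$ with $M$ large).
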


Let us also recall a Fredholm property result for second-order ordinary differential equations, see the monograph of Volpert \cite[Chapter 9, Theorem 2.4 p. 366]{Vol-11}.

\begin{thm}[Fredholm property for second-order ODEs]\label{thm:Fredholm-ODE}
	With the notations of Theorem \ref{thm:Fredholm-line}, the operator  $L$ is Fredholm provided the two equations
	\begin{equation}\label{eq:test-Fredholm}
		-a^\pm\xi^2 + b^\pm i\xi + c^\pm=0
	\end{equation}
	has no real solution $\xi\in\mathbb R$. In this case the index of $L$ is given by the formula
	\begin{equation*}
		\text{\rm ind } L=\kappa^+-\kappa^-,
	\end{equation*}
	where $\kappa^\pm$ is the number of complex solutions to the characteristic equation 
	\begin{equation}\label{eq:char}
		a^\pm X^2-b^\pm X+c^\pm =0
	\end{equation}
	which have a positive real part.
\end{thm}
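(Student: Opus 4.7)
My plan is to reduce the scalar second-order equation to a planar first-order linear system and then invoke the classical correspondence, due to Palmer, between exponential dichotomies on the two half-lines and the Fredholm property. The approach is parallel in spirit to the analysis carried out in the proof of Proposition \ref{prop:standing-stab}, where fundamental systems adapted to the behavior at $\pm\infty$ were constructed explicitly by fixed-point arguments.

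Writing $U := (u, u')^{T}$, the equation $Lu = f$ reads $\mathcal L U := U' - A(x)U = F$ with
$$
A(x) = \begin{pmatrix} 0 & 1 \\ -c(x)/a(x) & -b(x)/a(x) \end{pmatrix}, \qquad F = (0, f/a)^{T},
$$
and with limiting constant matrices $A^{\pm} := \lim_{x \to \pm\infty}A(x)$. Since $u \mapsto (u, u')$ is a topological isomorphism between $C^{2, \alpha}(\R)$ and a closed subspace of a product of Hölder spaces, $L$ and $\mathcal L$ share the Fredholm property and index. The eigenvalues of $A^{\pm}$ are exactly the roots $\lambda$ of $a^{\pm}\lambda^{2} + b^{\pm}\lambda + c^{\pm} = 0$; substituting $X = -\lambda$ recovers \eqref{eq:char}, while substituting $\lambda = i\xi$ with $\xi \in \R$ recovers \eqref{eq:test-Fredholm}. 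Hence the assumption that \eqref{eq:test-Fredholm} has no real root is equivalent to hyperbolicity of both $A^{\pm}$. Under the correspondence $X \leftrightarrow -\lambda$, a root of \eqref{eq:char} with positive real part corresponds to an eigenvalue of $A^{\pm}$ with negative real part, that is, to a direction in the stable subspace $E^{\pm}_{s}$, so $\kappa^{\pm} = \dim E^{\pm}_{s}$ and $\dim E^{\pm}_{u} = 2 - \kappa^{\pm}$.

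With hyperbolicity of $A^{\pm}$ in hand, the constant-coefficient systems $U' = A^{\pm}U$ admit exponential dichotomies on $\R$, and by the roughness theorem for exponential dichotomies (Coppel, Henry, Sacker--Sell) the variable-coefficient system $U' = A(x)U$ inherits exponential dichotomies on $[R, +\infty)$ and on $(-\infty, -R]$ for some $R > 0$ large enough, with projections onto $E^{+}_{s}$ and $E^{-}_{u}$ of ranks $\kappa^{+}$ and $2 - \kappa^{-}$ respectively. Palmer's theorem then guarantees that $\mathcal L$, and hence $L$, is Fredholm.

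For the index, I would identify $\ker L$ with the space of bounded solutions of $U' = A(x)U$ on $\R$, which at $x = 0$ form the intersection of a $(2 - \kappa^{-})$-dimensional subspace (the forward transport to $0$ of $E^{-}_{u}$) with a $\kappa^{+}$-dimensional subspace (the backward pullback of $E^{+}_{s}$). The analogous analysis for the formal adjoint $\mathcal L^{*}V = -V' - A(x)^{T}V$, whose limiting eigenvalues negate and whose stable/unstable subspaces therefore swap, identifies $\ker L^{*}$ as the intersection of subspaces of complementary total dimension. A linear-algebra count then yields
$$
\mathrm{ind}\,L = \dim \ker L - \dim \ker L^{*} = \bigl(2 - \kappa^{-}\bigr) + \kappa^{+} - 2 = \kappa^{+} - \kappa^{-}.
$$
The main obstacle is making this last bookkeeping step rigorous: one must either cite Palmer's index formula or, in the style of Proposition \ref{prop:standing-stab}, construct Green's functions on each half-line from fundamental solutions with prescribed behavior $e^{-X_j^{\pm}x}$ and count the free constants of integration versus the constraints imposed by boundedness at each infinity. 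The algebraic verification that \eqref{eq:test-Fredholm} encodes hyperbolicity of $A^{\pm}$ is the easy step.
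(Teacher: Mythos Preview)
The paper does not prove this theorem: it is quoted verbatim from Volpert's monograph \cite[Chapter~9, Theorem~2.4]{Vol-11} as a tool, with no argument given. So there is no ``paper's own proof'' to compare against.

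Your approach via reduction to a planar first-order system, hyperbolicity of the limiting matrices $A^{\pm}$, roughness of exponential dichotomies, and Palmer's theorem is correct and is in fact one of the standard routes to results of this type. The algebraic identifications you make (eigenvalues of $A^{\pm}$ $\leftrightarrow$ roots of $a^{\pm}\lambda^2+b^{\pm}\lambda+c^{\pm}=0$; the substitution $X=-\lambda$ recovering \eqref{eq:char}; $\lambda=i\xi$ recovering \eqref{eq:test-Fredholm}; $\kappa^{\pm}=\dim E^{\pm}_s$) are all right. The index count you give at the end,
\[
\mathrm{ind}\,L=\dim E^{+}_s+\dim E^{-}_u-2=\kappa^{+}+(2-\kappa^{-})-2=\kappa^{+}-\kappa^{-},
\]
is precisely Palmer's index formula, so the ``main obstacle'' you flag is not really an obstacle: you may simply cite Palmer (J.\ Differential Equations \textbf{55} (1984), 225--256) rather than redo the Green's-function bookkeeping by hand.

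One small point worth tightening: the passage from $L:C^{2,\alpha}(\R)\to C^{0,\alpha}(\R)$ to $\mathcal L:(C^{1,\alpha}(\R))^2\to(C^{0,\alpha}(\R))^2$ preserves the Fredholm property and index, but the one-line justification you give (``$u\mapsto(u,u')$ is a topological isomorphism onto a closed subspace'') is not by itself enough---you also need that the target spaces match up correctly, which here follows because the first component of $\mathcal L U$ is $u_1'-u_2$, so $\mathcal L$ restricted to pairs with $u_2=u_1'$ recovers $L$ in the second component and the first component is identically zero. This is routine but should be said.
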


\begin{remark}[Fredholm property in weighted Hölder spaces]\label{rem:weighted-holder}
	We cannot directly apply Theorem \ref{thm:Fredholm-line} and Theorem \ref{thm:Fredholm-ODE} to our situation since we  consider the operator $L$ 
acting from $C^{2, \alpha}_\mu(\R)$ into $C^\alpha_\mu(\R)$, and not from $C^{2,\alpha}(\R)$ into $C^{0,\alpha}(\R)$. To circumvent this, we consider the operator $L^\mu:C^{2, \alpha}(\mathbb R)\to C^\alpha(\mathbb R)$ defined by:
	\begin{align} 
		L^\mu(u)&:=e^{\mu\sqrt{1+x^2}}L\left(ue^{-\mu\sqrt{1+x^2}}\right) \nonumber \\
		&=a(x)u'' + \left[ \frac{-2\mu x}{\sqrt{1+x^2}}a(x)+b(x)\right]u'\nonumber  \\
		&\quad+ \left[\left(\frac{\mu^2 x^2}{1+x^2}+ \frac{\mu x^2}{(1+x^2)^{\frac 32}}-\frac{\mu}{\sqrt{1+x^2}} \right)a(x)- \frac{\mu x}{\sqrt{1+x^{2}}}b(x)+c(x)\right]u.\label{Lmu}
	\end{align}
 Since  $T_\mu:u\in C^{2, \alpha}_\mu(\mathbb R) \mapsto e^{\mu\sqrt{1+x^2}}u\in  C^{2, \alpha}(\mathbb R)$ is continuously invertible, and $T_\mu^{-1}:u\in C^{0, \alpha}(\mathbb R) \mapsto e^{-\mu\sqrt{1+x^2}}u\in  C^{0, \alpha}_\mu (\mathbb R)$ is continuously invertible, the map $L=T_\mu^{-1}L^\mu T_\mu$ shares the same Fredholm property and index as $L^\mu$. 	As a result, if $L^\mu$ satisfies the assumptions of Theorem \ref{thm:Fredholm-line}, or Theorem \ref{thm:Fredholm-ODE}, then $L$ is a Fredholm operator with the same index as that of $L^\mu$.
\end{remark}

%\medskip

 %\noindent{\bf Acknowledgements.} %The authors are grateful to the anonymous referees whose precise comments have improved the presentation of the results. 

\bibliographystyle{plain}
\bibliography{biblio}

\end{document}